\documentclass[11pt,regno]{amsart}
\usepackage{euscript,graphicx,amscd,amsgen,amsfonts,amssymb,latexsym,
amsmath,amsthm,graphicx,mathrsfs,overpic}

\newtheorem{theorem}{Theorem}[section]
\newtheorem{lemma}{Lemma}
\newtheorem{corollary}{Corollary}

\theoremstyle{definition}

\newtheorem{example}{Example}
\newtheorem{remark}{Remark}

\newcommand{\eqdef}{\stackrel{\scriptscriptstyle\rm def}{=}}

\DeclareMathOperator{\Jac}{Jac}

\DeclareMathOperator{\llangle}{\langle\hspace{-0.05cm}\langle}
\DeclareMathOperator{\rrangle}{\rangle\hspace{-0.05cm}\rangle}

\def\bN{\mathbb{N}}

\def\bR{\mathbb{R}}

\def\cK{\EuScript{K}}

\def\cP{\EuScript{P}}
\def\ccP{\mathcal{P}}
\def\cR{\EuScript{R}}
\def\cH{\EuScript{H}}
\def\cU{\EuScript{U}}

\def\cM{\mathcal{M}}

\author[K. Gelfert]{Katrin Gelfert}
\address{Instituto de Matem\'atica, Universidade Federal do Rio de Janeiro, Cidade Universit\'aria - Ilha do Fund\~ao, Rio de Janeiro 21945-909,  Brazil}
\email{gelfert@im.ufrj.br}
\author[B. Schapira]{Barbara Schapira} 
\address{LAMFA UMR 7352 CNRS, Facult\'e de math\'ematiques et informatique,  Universit\'e de Picardie Jules Verne, 33 rue Saint Leu, 80 000 Amiens, France}
\email{barbara.schapira@u-picardie.fr}

\thanks{BS thanks the ANR grant ANR JCJC-0108 GEODE for its support during the realization of this work.
 KG was partially supported by CNPq and DynEurBraz.}

\title[Pressures for rank one geodesic flows]{Pressures for geodesic flows\\ of rank one manifolds}

\subjclass{Primary: %
37D25, 
37D35, 
28D20, 
37C45
}

\keywords{geodesic flow, rank 1 manifolds, entropy, topological pressure, Poincar\'e series, Lyapunov exponents}
\begin{document}

\maketitle 
 
\begin{abstract}  
We study the geodesic flow on the unit tangent bundle of a rank one manifold and we give 
conditions under which all classical definitions of pressure of a H\"older continuous potential coincide. 
We provide a large deviation statement, which allows to neglect (periodic) orbits that lack sufficient hyperbolic behavior. 
Our results involve conditions on the potential, that take into consideration 
its properties in the nonhyperbolic part of the manifold. We draw some conclusions for the construction of equilibrium states.
\end{abstract}


\section{Introduction}

We are interested in the thermodynamical formalism for  the geodesic flow $G=(g^t)_{t\in\bR}$ 
on the unit tangent bundle $T^1M$ of a smooth compact nonpositively curved manifold $M$. 
More precisely, we shall investigate different notions of pressure, study equilibrium states, 
and relate them to large deviation results. 
 
 Given a continuous map $\varphi\colon T^1M\to\bR $ (also called \emph{potential}),  
the {\em variational pressure} of $\varphi$ (with respect to the flow) is defined as
\begin{equation}\label{def-variational-pressure}
	P_\cM(\varphi)
	\eqdef\sup_{\mu\in\cM} \left(h(\mu)+\int_{T^1M}\varphi\,d\mu\right),
\end{equation}
where $\cM$ is the set of invariant probability measures under the geodesic flow 
$(g^t)_{t\in\bR}$ and $h(\mu)$ is the entropy of the time-1 map $g^1$ 
with respect to the flow-invariant probability measure $\mu$. 
An   {\em equilibrium state} for $\varphi$ is  an invariant probability measure 
$m_\varphi$ realizing the maximum of the pressure $P_\cM(\varphi)$.  
A \emph{measure of maximal entropy} is an invariant probability measure 
realizing the maximum of the \emph{topological entropy} $h=h_{top}(T^1M)=P_\cM(0)$.

When the manifold $M$ has negative curvature, the geodesic flow on $T^1M$ is Anosov. 
In this case, it is well known that any H\"older potential admits 
a unique equilibrium state and that this equilibrium state has very strong ergodic properties: 
it is ergodic, mixing, and possesses a local product structure~\cite{BowRue:75}. 
The measure of maximal entropy is the equilibrium measure of any constant potential, 
while the Liouville measure is the unique equilibrium state of 
a certain H\"older continuous potential $\varphi^{(u)}$ 
that is defined by means of the derivative of the Jacobian of the geodesic flow 
restricted to the unstable foliation (see~\eqref{phidef} for a precise definition). 

In this paper we consider compact {\em rank one} manifolds, that is, smooth compact
 nonpositively curved manifolds where   the curvature can vanish,
but where there exists at least one {\em rank one vector}, that is, a vector whose geodesic orbit 
does not bound any (even infinitesimal) flat strip (see Section \ref{geometric-preliminaries} for details). 
The \emph{regular set}  $\cR$  is the set of all rank one vectors and  the  \emph{higher rank set}
    $\cH$ is its  complementary set.
In this situation, the above potentials and their equilibrium states behave quite differently. 
 The potential $\varphi^{(u)}$ is still continuous, but probably not H\"older continuous in general. 
Besides the Liouville measure, 
 it admits trivial equilibrium measures, supported by the periodic orbits staying 
in the flat part $\cH$  of the manifold.  
Moreover, the ergodicity of the Liouville measure remains up to now a difficult open question. 
The measure of maximal entropy is known to exist (as does, in fact,
the equilibrium measure of any continuous potential) and to be unique~\cite{Kni:98}.
 Beyond these  few examples, little is known about 
uniqueness and further properties of equilibrium states.  

Motivated by the study of equilibrium measures for  (H\"older) continuous potentials, we 
need a complete understanding of the pressure function.
Therefore, we investigate in greater detail different notions of pressure, and
their relations to each other. 
The {\em pressure} of a potential measures the exponential growth rate of the complexity 
of the dynamics, weighted by this potential. There are several notions of  
 pressure,  that are known to coincide for sufficiently hyperbolic systems
(see \cite{PauPolSch:13} in the case of the geodesic flow in negative curvature).    
In Theorem~\ref{equality-pressure-surfaces}, we clarify their relations  
and prove that for certain continuous potentials, 
all existing definitions of pressure coincide also for geodesic flows of rank one surfaces. 

Let us introduce briefly these pressures (see Section \ref{pressure} for details). 
The {\em variational pressure} $P_{\cM}(\varphi)$ has been defined in~\eqref{def-variational-pressure}. 
The {\em topological pressure} $P_{top}(\varphi)$ is defined as 
the exponential growth rate of the values of $\varphi$ along orbits 
that are separated through the dynamics. 
The {\em Gurevich pressure} (or \emph{periodic orbits pressure}) $P_{Gur}(\varphi)$ 
is the exponential growth rate of the values of $\varphi$ along periodic orbits of increasing period. 
The regular Gurevich pressure $P_{Gur, \cR}(\varphi)$ is the exponential growth rate of the values of $\varphi$ along \emph{regular}
periodic orbits of increasing period.  
Finally,  the \emph{critical exponent} $\delta_{\Gamma,\varphi}$  is defined by means of  the fundamental group $\Gamma$
of $M$ acting by isometries on the universal cover  $\widetilde{M}$ of $M$.
Let $\widetilde{\varphi}$ be the $\Gamma$-invariant lift of $\varphi$ to $T^1\widetilde{M}$. 
We denote by $\delta_{\Gamma,\varphi,x}$  the {\em critical exponent}  of the
 Poincar\'e series $\sum_{\gamma\in\Gamma}e^{\int_x^{\gamma x}\widetilde{\varphi}}$, 
where the integral of  $\widetilde{\varphi}$ is taken along the geodesic path joining $x$ to $\gamma x$. 
Denote by $\delta_{\Gamma,\varphi}$ the supremum of these critical exponents over all $x\in\widetilde{M}$.   

In \cite{BurGel:14}, Burns and Gelfert show that if $M$ is a compact rank one surface, 
then there exists an increasing sequence $(\Lambda_k)_{k\in\bN}$ of basic sets 
whose union is dense in $T^1M$ and contains all regular periodic orbits and that 
topological pressure of appropriate $\varphi$ on $\Lambda_k$ converges to the pressure of $\varphi$ on $T^1M$. 
We  complete this result and study how closed geodesics contribute in this picture.
In restriction to  a basic set $\Lambda\subset T^1M$, 
for a  H\"older potential  $\varphi$ all above introduced pressures of the flow restricted 
to $\Lambda$ and, in particular Gurevich pressure,  
coincide and we will denote them shortly by $P(\varphi,\Lambda)$.  

For $T>0$, denote by $\Pi_{\cR}(T-1,T)$ (resp. $\Pi_{\cR}(T)$)  the set of primitive rank one 
  periodic orbits $\beta$ in $\cR$   of period  $\ell(\beta)\in[T-1,T)$ (resp. $\ell(\beta)< T$). 
In the case of higher rank periodic orbits, there can exist infinitely many parallel periodic orbits of same length, and 
we denote by $\Pi_{\cH}(T-1,T)$  (resp. $\Pi_{\cH}(T)$) a set of representatives of each homotopy class of primitive 
periodic orbits of $\cH$ of period $\ell(\beta)\in[T-1,T)$
 (resp. $\ell(\beta)< T$). 
Denote by $\#$ the cardinality.

There are examples due to Gromov (see Section~\ref{examples}) where the set $\cH$ carries positive entropy.
 However, it is always strictly less than the full topological entropy. 
More precisely, the following quantity is well-defined and positive (see Theorem~\ref{Knieper-thm} for details). 
\begin{equation}\label{defesspilon}
		\varepsilon_0\eqdef
		\limsup_{T\to +\infty}\frac{1}{T}\log 
			\frac{\#\Pi_\cR(T-1,T)}{\#\Pi_{\cH}(T-1,T)}
		=\limsup_{T\to +\infty}\frac{1}{T}\log 
			\frac{\#\Pi_\cR(T)}{\#\Pi_{\cH}(T)}	\,.
\end{equation}

The pressure of a continuous potential $\varphi\colon T^1M\to\bR$ measures the complexity of the dynamics weighted by the 
potential $\varphi$, whereas this complexity, topologically, comes mainly from the hyperbolic part $\cR$ of the manifold. 
Therefore, the different notions of pressures will behave reasonably, as soon as  $\varphi$ is not too large on $\cH$, 
in comparison with the quantity $\varepsilon_0$ defined above. This is   condition~\eqref{con-pot} for $\varphi$ below, where $\cM(\cH)$ (resp. $\cM(\cR)$)
denotes the set of invariant probability measures giving full measure to $\cH$ (resp. $\cR$).

\begin{theorem}\label{equality-pressure-surfaces} 
	Let $M$ be a smooth compact rank one manifold and let 
$\varphi\colon T^1M\to \bR$ be a continuous map.  Then the following inequalities hold 
	\begin{equation}\label{inequality-pressure-manifolds}
	P_{Gur, \cR}(\varphi) 
	\le P_{Gur}(\varphi)
	\le   \delta_{\Gamma,\varphi }
	\le P_{top}(\varphi)
	=P_{\cM}(\varphi)\,.
	\end{equation}
Moreover, if the potential $\varphi$ satisfies 
	\begin{equation}\label{con-pot}
		\max_{\mu\in\cM(\cH)}\int\varphi\,d\mu
		<\inf_{\mu\in\cM(\cR)}\int\varphi\,d\mu+\varepsilon_0\,,
	\end{equation}
then  we have
	\begin{equation}\label{equality-Gurevic}
	P_{Gur, \cR}(\varphi) 
	=P_{Gur}(\varphi)
	\end{equation}

If $M$ is a surface and $\varphi$ is H\"older continuous and such that 
$\varphi|_\cH$ is constant  then all notions of pressure coincide: 
	\begin{equation}\label{eqtheoremsurface}
	P_{Gur, \cR}(\varphi)
	= P_{Gur}(\varphi)
	= \delta_{\Gamma,\varphi }
	= P_{top}(\varphi)
	= \sup_{k \ge1}P_{top}(\varphi, \Lambda_k)
	= P_{\cM}(\varphi)\,.
\end{equation}  
\end{theorem}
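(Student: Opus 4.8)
The plan has three parts: the chain \eqref{inequality-pressure-manifolds} follows from soft comparison arguments; \eqref{equality-Gurevic} follows from a counting estimate that isolates the contribution of $\cH$; and \eqref{eqtheoremsurface} follows by closing \eqref{inequality-pressure-manifolds} from the right with the basic sets of \cite{BurGel:14}.

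\emph{The inequalities \eqref{inequality-pressure-manifolds}.} The bound $P_{Gur,\cR}(\varphi)\le P_{Gur}(\varphi)$ is immediate, since regular periodic orbits form a subfamily of all periodic orbits. For $P_{Gur}(\varphi)\le\delta_{\Gamma,\varphi}$ I would fix $x\in\widetilde M$ and associate to each primitive periodic orbit of period in $[T-1,T)$ the corresponding conjugacy class, choosing a representative $\gamma$ (by conjugation) whose translation axis passes within $\diam M$ of $x$; then $d(x,\gamma x)<T+2\diam M$, distinct orbits give distinct group elements, and a comparison of the geodesic segment $[x,\gamma x]$ with the closed geodesic it shadows bounds $\int_\beta\varphi$ by $\int_x^{\gamma x}\widetilde\varphi$ up to an error that is absorbed in the exponential rate; summing and letting $T\to\infty$ gives $P_{Gur}(\varphi)\le\delta_{\Gamma,\varphi,x}\le\delta_{\Gamma,\varphi}$. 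The bound $\delta_{\Gamma,\varphi}\le P_{top}(\varphi)$ is dual: the segments $[x,\gamma x]$ with $d(x,\gamma x)\le R$ project to geodesic segments of length $\approx R$ that are pairwise separated through the dynamics (because $\Gamma x$ is discrete), so the partial Poincar\'e sums are dominated by the defining sums of $P_{top}(\varphi)$. Finally $P_{top}(\varphi)=P_\cM(\varphi)$ is the classical variational principle for the pressure of a continuous flow on a compact metric space. These orbital-integral comparisons are essentially those of \cite{PauPolSch:13}, but need some care here because $\varphi$ is only assumed continuous.

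\emph{From \eqref{con-pot} to \eqref{equality-Gurevic}.} By the chain just proved it suffices to show $P_{Gur}(\varphi)\le P_{Gur,\cR}(\varphi)$. Splitting the periodic orbits counted by $P_{Gur}$ into those lying in $\cR$ and those lying in $\cH$, and using that $\limsup_T\tfrac1T\log(a_T+b_T)=\max\bigl(\limsup_T\tfrac1T\log a_T,\ \limsup_T\tfrac1T\log b_T\bigr)$ for positive sequences, I reduce to proving $Q\eqdef\limsup_T\tfrac1T\log\sum_{\beta\in\Pi_\cH(T-1,T)}e^{\int_\beta\varphi}\le P_{Gur,\cR}(\varphi)$. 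Because $\cR$ is open, $\cH$ is closed, so every weak-$*$ limit of empirical measures of periodic orbits contained in $\cH$ belongs to $\cM(\cH)$; hence $\sum_{\beta\in\Pi_\cH(T-1,T)}e^{\int_\beta\varphi}\le\#\Pi_\cH(T-1,T)\,e^{T(\max_{\mu\in\cM(\cH)}\int\varphi\,d\mu\,+\,o(1))}$. Dually, the empirical measure of a regular periodic orbit lies in $\cM(\cR)$, so $\sum_{\beta\in\Pi_\cR(T-1,T)}e^{\int_\beta\varphi}\ge\#\Pi_\cR(T-1,T)\,e^{T\inf_{\mu\in\cM(\cR)}\int\varphi\,d\mu\,-\,O(1)}$. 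By Theorem~\ref{Knieper-thm}, $\tfrac1T\log\#\Pi_\cR(T-1,T)\to h$ and the limit defining $\varepsilon_0$ exists, so $\tfrac1T\log\#\Pi_\cH(T-1,T)\to h-\varepsilon_0$. Combining, $Q\le h-\varepsilon_0+\max_{\cM(\cH)}\int\varphi$ whereas $P_{Gur,\cR}(\varphi)\ge h+\inf_{\cM(\cR)}\int\varphi$, and \eqref{con-pot} is precisely $\max_{\cM(\cH)}\int\varphi<\inf_{\cM(\cR)}\int\varphi+\varepsilon_0$; hence $Q\le P_{Gur,\cR}(\varphi)$. I expect this counting step to be the main obstacle: it requires controlling the $\limsup$'s uniformly --- this is where Theorem~\ref{Knieper-thm} is used, to make the relevant exponential rates (or at least their ratio) genuine limits --- and verifying that periodic orbits in $\cH$, resp.\ in $\cR$, cannot beat $\max_{\cM(\cH)}\int\varphi$, resp.\ undershoot $\inf_{\cM(\cR)}\int\varphi$, by more than $o(T)$.

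\emph{The surface case \eqref{eqtheoremsurface}.} Given \eqref{inequality-pressure-manifolds}, it remains only to prove $P_{top}(\varphi)\le P_{Gur,\cR}(\varphi)$; then all six quantities coincide. By \cite{BurGel:14} --- whose hypotheses are met, $\varphi$ being H\"older with $\varphi|_\cH$ constant --- the increasing basic sets $(\Lambda_k)$ satisfy $\sup_k P_{top}(\varphi,\Lambda_k)=\lim_k P_{top}(\varphi,\Lambda_k)=P_{top}(\varphi)$. Each $\Lambda_k$ is a hyperbolic basic set, hence $\Lambda_k\subset\cR$, and by the coincidence of all pressures on a basic set, $P_{top}(\varphi,\Lambda_k)=P(\varphi,\Lambda_k)$ is the exponential growth rate of $\sum_\beta e^{\int_\beta\varphi}$, the sum over primitive periodic orbits $\beta$ contained in $\Lambda_k$ with $\ell(\beta)\in[T-1,T)$; each such $\beta$ is a primitive regular periodic orbit, so it lies in $\Pi_\cR(T-1,T)$, whence $P_{top}(\varphi,\Lambda_k)\le P_{Gur,\cR}(\varphi)$ for all $k$. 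Taking the supremum over $k$ gives $P_{top}(\varphi)\le P_{Gur,\cR}(\varphi)$, which together with \eqref{inequality-pressure-manifolds} forces every equality in \eqref{eqtheoremsurface}, the one with $P_\cM(\varphi)$ being once more the variational principle.
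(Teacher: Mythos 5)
The genuine problem is in your direct argument for $P_{Gur}(\varphi)\le\delta_{\Gamma,\varphi}$ in \eqref{inequality-pressure-manifolds}. You fix a single $x\in\widetilde M$ and conjugate so that the axis of $\gamma$ passes within $\diam M$ of $x$, claiming the discrepancy between $\int\varphi\,d\nu_\beta$ and $\int_x^{\gamma x}\widetilde\varphi$ is ``absorbed in the exponential rate''. In nonpositive curvature there is no shadowing below the scale of the endpoint distance: convexity only gives that the segment $[x,\gamma x]$ and the axis stay within roughly $\diam M$ of each other, so for a potential that is merely continuous (the generality of \eqref{inequality-pressure-manifolds}) the error in comparing the two integrals is of order $\omega(\diam M)\cdot T$, with $\omega$ the modulus of continuity at a non-small scale — linear in $T$ with a constant that does not go to zero, hence it does change the exponential rate. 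Note also that your argument, if it worked, would give the stronger statement $P_{Gur}(\varphi)\le\delta_{\Gamma,\varphi,x}$ for \emph{every} fixed $x$, whereas the theorem only involves $\delta_{\Gamma,\varphi}=\sup_x\delta_{\Gamma,\varphi,x}$, and the paper explicitly warns that $\delta_{\Gamma,\varphi,x}$ depends on $x$. The correct device (Pollicott's, which the paper invokes for this inequality, observing that his Lipschitz argument extends to uniformly continuous potentials, and which reappears in the paper's proof of Theorem~\ref{large-deviations}(2)) is to take a finite $r$-dense set $\{x_i\}$ with $r$ small, attach to each closed geodesic a nearby $x_i$, use uniform continuity and the convexity estimates of Section~\ref{periodic-orbits} to get an error $\eta T+O(1)$ with $\eta$ arbitrarily small, and bound by $\max_i\delta_{\Gamma,\varphi,x_i}\le\delta_{\Gamma,\varphi}$; this is precisely why the supremum over basepoints is needed. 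A smaller imprecision of the same kind occurs in your $\delta_{\Gamma,\varphi}\le P_{top}(\varphi)$ step: the orbit segments attached to distinct $\gamma$ are not pairwise $(\varepsilon,T)$-separated (they all start at $x$); discreteness of $\Gamma x$ gives instead a uniform bound on the multiplicity with which they can $\varepsilon$-shadow one another, which is the content of the cited \cite[Lemma 4]{Pol:96} (and one must first normalize so that the pressures are positive, to identify the annular and cumulative forms of $\delta_{\Gamma,\varphi,x}$ as in \eqref{eq:deltaaa}).

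The remaining two parts are essentially the paper's own proof. Your derivation of \eqref{equality-Gurevic} — splitting $\Pi(T-1,T)$ into $\Pi_\cR$ and $\Pi_\cH$, bounding the singular sum by $\#\Pi_\cH$ times $e^{T\max_{\cM(\cH)}\int\varphi\,d\mu}$, bounding the regular sum below via $\inf_{\cM(\cR)}\int\varphi\,d\mu$ and Knieper's counting — is the paper's argument repackaged (the paper normalizes $\inf_{\cM(\cR)}\int\varphi\,d\mu=0$ so that every regular term is $\ge1$; your ``max of limsups'' split is an equivalent bookkeeping). One caveat: your assertion that ``the limit defining $\varepsilon_0$ exists'' and that $\tfrac1T\log\#\Pi_\cH(T-1,T)\to h-\varepsilon_0$ is not part of Theorem~\ref{Knieper-thm}, which only provides \emph{some} $\varepsilon>0$ in \eqref{different-exponential-growth-rates} and a genuine limit for $\#\Pi_\cR(T)$; what is really used is that $\#\Pi_\cH(T)\le e^{-T(\varepsilon_0-\delta)}\#\Pi_\cR(T)$ for all large $T$, a reading of the limsup in \eqref{defesspilon} that the paper itself adopts, so this is a shared, not a new, leap. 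The surface case \eqref{eqtheoremsurface} is handled exactly as in the paper: periodic orbits of $g|_{\Lambda_k}$ are regular, so $P(\varphi,\Lambda_k)\le P_{Gur,\cR}(\varphi)$, and \eqref{ea:limits} from \cite{BurGel:14} closes the chain (keep in mind that \eqref{ea:limits} is stated for $\cH\ne\emptyset$; when $\cH=\emptyset$ the flow is Anosov and the equalities are classical).
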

Moreover, even if (\ref{con-pot}) is probably not optimal, it is certainly relevant. Indeed, we provide in Section~\ref{examples}
an example where (\ref{con-pot}) is not satisfied, (\ref{equality-Gurevic}) holds, but (\ref{eqtheoremsurface}) does not hold, 
because of a strict inequality $P_{Gur}(\varphi)<P_{\cM}(\varphi)$. 

Observe that \eqref{con-pot} holds for any non-negative potential $\varphi$
 which vanishes on the higher rank set $\cH$. In particular, 
  the potential $\varphi^{(u)}$ as well as   any constant potential satisfy \eqref{con-pot}.
In the case of the zero potential,~\eqref{equality-Gurevic} is due to Knieper~\cite{Kni:98} (see Theorem \ref{Knieper-thm}).  
Equality \eqref{equality-Gurevic} says that   good potentials do not see singular periodic orbits, ``good''  meaning  
in particular those that are positive and smaller on $\cH$  than the difference $\varepsilon_0$ of growth rates 
of regular and singular periodic orbits.

Our second result shows another way to avoid zero curvature, by considering only periodic orbits that are sufficiently hyperbolic 
from the point of view of their Lyapounov exponent.  
To obtain this, we apply {\em large deviation techniques}. 
 Denote by $\chi(\beta)$ the smallest positive
 Lyapunov exponent of a periodic orbit $\beta$, by $\Gamma_\delta\subset\Gamma$ 
the subset of elements whose associated periodic orbit has positive Lyapunov exponents greater than $\delta$, and by $\nu_\beta$ the 
invariant   measure of mass $\ell(\beta)$ supported by the periodic orbit $\beta\in\Pi$. 
  
Following a strategy in \cite{Pol:96}, we obtain the following result. 

\begin{theorem}\label{large-deviations} 
	Let $M$ be a compact rank one  surface. 
If $\varphi\colon T^1M\to\bR$ is H\"older continuous and such that $\varphi|_\cH$ is constant and satisfies
\begin{equation}\label{propotpot}
	\alpha(\varphi)\eqdef
	P_{\cM}(\varphi)-\max_{T^1M}\varphi>0,
\end{equation}
then for all $\delta\in(0,\alpha(\varphi))$, we have 
\begin{enumerate}
\item 
	$\displaystyle \limsup_{T\to +\infty}\frac{1}{T}\log
 		\sum_{\beta\in\Pi(T-1,T),\,\chi(\beta)>\alpha(\varphi)-\delta} 
		e^{\int\varphi\,d\nu_\beta}=P_{\cM}(\varphi)$, 
\item for all $x\in\widetilde{M}$, we have 
\[
	\limsup_{T\to +\infty}\frac{1}{T}\log
 \sum_{\gamma\in \Gamma_{\alpha-\delta}, \,T-1\le d(x,\gamma x)< T} e^{\int_x^{\gamma x}\widetilde{\varphi}}=P_{\cM}(\varphi).
\]	 
\end{enumerate}
\end{theorem}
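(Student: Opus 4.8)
The plan is to reduce the statement to a single uniformly hyperbolic basic set, where the classical thermodynamic formalism is available, and then to discard the periodic orbits with small Lyapunov exponent by a Legendre-transform argument that plays the role of the large deviation upper bound of~\cite{Pol:96}.

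First I would invoke~\cite{BurGel:14} to fix an increasing sequence $(\Lambda_k)_{k\in\bN}$ of basic sets on which the geodesic flow is a topologically transitive Axiom~A basic set (in particular expansive, being uniformly hyperbolic without fixed points), and such that, by~\eqref{eqtheoremsurface}, $\sup_{k\ge1}P_{top}(\varphi,\Lambda_k)=P_{\cM}(\varphi)$. Write $\alpha=\alpha(\varphi)$ and fix $\delta\in(0,\alpha)$, so $\alpha-\delta>0$. Since $M$ is a surface, the unstable bundle over $\Lambda_k$ is one-dimensional and H\"older continuous, so the infinitesimal unstable expansion rate defines a H\"older function $\chi\colon\Lambda_k\to\bR$ with $\int\chi\,d\mu_\beta=\chi(\beta)$ for every periodic orbit $\beta\subset\Lambda_k$, where $\mu_\beta\eqdef\nu_\beta/\ell(\beta)$. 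On $\Lambda_k$ one has, for every H\"older potential $\psi$, the variational identity $P_{top}(\psi,\Lambda_k)=\sup_{\mu}(h(\mu)+\int\psi\,d\mu)$ over invariant probabilities $\mu$ carried by $\Lambda_k$ (with $\mu\mapsto h(\mu)$ affine and upper semicontinuous), Ruelle's inequality $h(\mu)\le\int\chi\,d\mu$, and the periodic orbit counting asymptotics
\[
	\limsup_{T\to+\infty}\frac1T\log\sum_{\beta\subset\Lambda_k,\ \ell(\beta)\in[T-1,T)}e^{\int\psi\,d\nu_\beta}=P_{top}(\psi,\Lambda_k).
\]

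The heart of the argument is the negligibility of low-exponent periodic orbits on $\Lambda_k$. Applying the counting asymptotics to the H\"older potential $\varphi-c\chi$ for $c\ge0$, and using that $\chi(\beta)\le\alpha-\delta$ forces $e^{\int\varphi\,d\nu_\beta}\le e^{c\ell(\beta)(\alpha-\delta)}e^{\int(\varphi-c\chi)\,d\nu_\beta}$, one gets for every $c\ge0$
\[
	\limsup_{T\to+\infty}\frac1T\log\sum_{\substack{\beta\subset\Lambda_k,\ \ell(\beta)\in[T-1,T)\\ \chi(\beta)\le\alpha-\delta}}e^{\int\varphi\,d\nu_\beta}\ \le\ c(\alpha-\delta)+P_{top}(\varphi-c\chi,\Lambda_k)=\sup_{\mu}\Bigl(h(\mu)+\int\varphi\,d\mu+c\bigl(\alpha-\delta-\int\chi\,d\mu\bigr)\Bigr).
\]
Taking the infimum over $c\ge0$, Sion's minimax theorem (the right-hand side is affine, hence convex, and continuous in $c$, affine and upper semicontinuous in $\mu$, and the set of such $\mu$ is compact and convex) gives the bound $\sup\{h(\mu)+\int\varphi\,d\mu:\int\chi\,d\mu\le\alpha-\delta\}$ over invariant probabilities $\mu$ carried by $\Lambda_k$, which by Ruelle's inequality and~\eqref{propotpot} is at most $(\alpha-\delta)+\max_{T^1M}\varphi=P_{\cM}(\varphi)-\delta$. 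Now choose $k$ with $P_{top}(\varphi,\Lambda_k)>P_{\cM}(\varphi)-\delta$, possible since the supremum over $k$ equals $P_{\cM}(\varphi)$ and $\delta>0$. Subtracting the low-exponent part from the full $\Lambda_k$ sum, whose exponential growth rate is $P_{top}(\varphi,\Lambda_k)$, the remaining sum over $\beta\subset\Lambda_k$ with $\chi(\beta)>\alpha-\delta$ still grows at rate $P_{top}(\varphi,\Lambda_k)$. As these orbits belong to $\Pi(T-1,T)$ and satisfy $\chi(\beta)>\alpha-\delta$, the left-hand side of~(1) is $\ge P_{top}(\varphi,\Lambda_k)$, hence $\ge P_{\cM}(\varphi)$ after letting $k\to\infty$; the reverse inequality is immediate since, by~\eqref{eqtheoremsurface}, the full sum over $\Pi(T-1,T)$ already grows at rate $P_{Gur}(\varphi)=P_{\cM}(\varphi)$. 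This proves~(1). For~(2) the upper bound is $\delta_{\Gamma,\varphi,x}\le\delta_{\Gamma,\varphi}\le P_{top}(\varphi)=P_{\cM}(\varphi)$ from~\eqref{inequality-pressure-manifolds}, and the lower bound follows from~(1) through the correspondence between conjugacy classes of $\Gamma$ and closed geodesics used in the proof of Theorem~\ref{equality-pressure-surfaces} (compare~\cite{PauPolSch:13}): using cocompactness of the $\Gamma$-action one realizes, for each primitive $\beta\subset\Lambda_k$ with $\chi(\beta)>\alpha-\delta$, a conjugate $\gamma_\beta\in\Gamma_{\alpha-\delta}$ with $d(x,\gamma_\beta x)$ within a bounded additive error of $\ell(\beta)$ and $\int_x^{\gamma_\beta x}\widetilde{\varphi}$ within a bounded error of $\int\varphi\,d\nu_\beta$, so that summing over the (boundedly many) unit annuli that receive the $\gamma_\beta$ and invoking~(1) yields $\ge P_{\cM}(\varphi)$.

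I expect the main obstacle to be this negligibility estimate for low-exponent periodic orbits on $\Lambda_k$: although the minimax formulation makes it short, it relies on the H\"older regularity of $\chi$ on the uniformly hyperbolic set $\Lambda_k$ (so that $\varphi-c\chi$ carries the full Bowen--Ruelle formalism and the counting asymptotics), on Ruelle's inequality, and on the counting asymptotics holding uniformly over the family of potentials $\varphi-c\chi$, $c\ge0$. A secondary point, in~(2), is the uniform (in $\beta$ and in $x\in\widetilde{M}$) control of the error $\int_x^{\gamma_\beta x}\widetilde{\varphi}-\int\varphi\,d\nu_\beta$; this is where one uses that the selected orbits are regular with $\chi(\beta)>\alpha-\delta>0$, so that the geodesic from $x$ to $\gamma_\beta x$ approaches the axis of $\gamma_\beta$ fast enough for the H\"older contributions to be summable, and it is handled as in~\cite{PauPolSch:13}.
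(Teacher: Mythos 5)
Your item~(1) is essentially correct, but it follows a genuinely different route from the paper. The paper first proves an abstract level-2 upper bound (Lemma~\ref{l:17}, following Pollicott \cite{Pol:95}): for any compact $\cK\subset\cM$ the weighted periodic orbit sum over $\{\nu_\beta\in\cK\}$ grows at rate at most $P_{top}(\varphi)-\inf_\cK I_\varphi$, where $I_\varphi$ is the Legendre conjugate of $\psi\mapsto P(\varphi+\psi)-P(\varphi)$; applied to $\cK=\{\mu:\chi(\mu)\le\alpha(\varphi)-\delta\}$ together with Ruelle's inequality this gives the bound $P_{top}(\varphi)-\delta$ for the low-exponent part, and then the same subtraction trick you use (along a subsequence realizing $P_{Gur}=P_{top}$, which needs Theorem~\ref{equality-pressure-surfaces}) finishes the proof. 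You instead localize to the basic sets $\Lambda_k$ of \cite{BurGel:14} and tilt by $-c\chi$; note that your minimax is unnecessary, since $c=1$ already suffices: $P(\varphi+\varphi^{(u)},\Lambda_k)\le\max\varphi$ by Ruelle's inequality, so the low-exponent sum on $\Lambda_k$ grows at rate at most $(\alpha-\delta)+\max\varphi=P_\cM(\varphi)-\delta$. The trade-off: the paper's lemma (whose proof only needs the inequality $P_{Gur}(\varphi+\psi)\le P_{top}(\varphi+\psi)$ for continuous $\psi$, i.e.\ \eqref{inequality-pressure-manifolds}) controls \emph{all} low-exponent orbits of $\Pi(T-1,T)$, including singular ones and regular ones outside every fixed $\Lambda_k$, which is a genuine large-deviation statement; your version only controls the orbits inside $\Lambda_k$, which happens to be enough for the lower bound in~(1), while your upper bound correctly falls back on $P_{Gur}(\varphi)=P_\cM(\varphi)$ from \eqref{eqtheoremsurface}.

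In item~(2) there is a genuine gap in your lower bound. You need, for a \emph{fixed arbitrary} $x\in\widetilde M$ and for every selected orbit $\beta$, a conjugate $\gamma_\beta\in\Gamma_{\alpha-\delta}$ with $\bigl\lvert\int_x^{\gamma_\beta x}\widetilde\varphi-\int\varphi\,d\nu_\beta\bigr\rvert$ bounded uniformly in $\beta$. In nonpositive curvature, convexity only gives that the segment $[x,\gamma_\beta x]$ stays within distance roughly $\diam(M)$ of the axis of $\gamma_\beta$; the corresponding unit tangent vectors need not be close in the Sasaki metric (think of nearly parallel geodesics at bounded distance), so a priori the error is of order $T$ times the oscillation of $\varphi$ at scale $\diam(M)$, which does not tend to zero. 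Your proposed fix---that $\chi(\beta)>\alpha-\delta>0$ forces the segment to approach the axis fast enough for the H\"older contributions to be summable, ``as in \cite{PauPolSch:13}''---is not justified here: a lower bound on the \emph{mean} Lyapunov exponent of the closed orbit does not prevent long excursions through nearly flat regions with no local contraction, and the argument of \cite{PauPolSch:13} uses uniform negative curvature; nothing in the paper's toolbox gives a uniform approach rate for such segments starting at distance $\diam(M)$ from the axis. The paper's proof sidesteps exactly this point: it fixes a finite cover $\{B(x_i,r)\}$ with $r$ small, and for each closed geodesic picks an $x_i$ whose $r$-ball the geodesic actually enters, so that the comparison of integrals costs only $\eta\,\ell(\beta)+2r\lVert\varphi\rVert_\infty$ with $\eta$ arbitrarily small (using the classical nonpositive-curvature statement of Section~\ref{periodic-orbits}); the price is that the resulting lower bound is a maximum over the finitely many base points $x_i$, which is then played against $\delta_{\Gamma,\varphi,x_i}\le\delta_{\Gamma,\varphi}\le P_{top}(\varphi)$. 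To repair your argument you would either have to prove a uniform bounded-error comparison for arbitrary base points (not available from the results quoted in the paper), or replace your fixed-$x$ comparison by the cover-based one. Your upper bound in~(2), via $\delta_{\Gamma,\varphi,x}\le\delta_{\Gamma,\varphi}\le P_{top}(\varphi)=P_\cM(\varphi)$, is fine.
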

In Section~\ref{examples}, we develop an example which contradicts assumptions and conclusions of this theorem.  

After some preliminaries in Section~\ref{geometric-preliminaries}, we prove Theorem~\ref{equality-pressure-surfaces}    in Section~\ref{pressure}. In Section~\ref{section-equilibrium-states} we deal with equilibrium states. Section~\ref{sec:potentialscon} 
studies the conditions for the potentials used in our main theorems. Theorem \ref{large-deviations} is shown in  Section~\ref{deviations}.


\section{Geometric preliminaries}\label{geometric-preliminaries}

More details for the material in this section can be found in Ballmann 
\cite{Bal:95}, Knieper \cite{Kni:02}, and Eberlein~\cite{Ebe:01}. 

\subsection{Periodic orbits of the geodesic flow}\label{periodic-orbits}

Let $M$ be a smooth compact nonpositively curved manifold. 
The rank of a vector $v\in T^1M$ is the dimension of the set of parallel 
Jacobi fields along the geodesic defined by $v$. It is at least one.  
If this geodesic bounds a euclidean (flat) strip isometric to $[0,\varepsilon]\times\bR$ for some $\varepsilon>0$, 
then this rank is at least $2$. 
On the other hand, vectors whose geodesic eventually enters the negatively curved part 
of the manifold   have rank one.  
The manifold $M$ is a \emph{rank one manifold} if it admits at least one rank one vector.
 The geodesic flow acts on the unit tangent bundle $T^1M$.
 
We denote by $\Gamma=\pi_1(M)$ the fundamental group of $M$ and by $\widetilde{M}$ 
its universal cover, so that $M$ identifies with $\widetilde{M}/\Gamma$.
The set of conjugacy classes of   elements $\gamma\in\Gamma$ is in one-to-one 
correspondence with the set of   free homotopy classes of (oriented) loops in $M$. 
In such a homotopy class, there exists a closed geodesic on $M$ which is length-minimizing. 
When such a   geodesic is regular, it is unique. 
Otherwise, such minimizing geodesics are all parallel and with same length, and 
we choose one closed geodesic among them. 

Each oriented closed geodesic (chosen as above in the singular case) 
lifts to $T^1M$ in a unique way into  a  periodic orbit of 
the geodesic flow.  Let $\Pi$ denote the subset  of primitive periodic orbits 
 (that is, not an iterate of another periodic orbit).  
For all $\beta\in\Pi$, we denote by $\ell(\beta)$ its period 
(that is the length of the associated closed geodesic in $M$)  and $\nu_\beta$  
the Lebesgue measure along the orbit $\beta$. 
Denote by $\Pi(T)\subset \Pi$ the subset of orbits of period smaller than  $T$ 
and by $\Pi(T_1,T_2)$ the subset of orbits whose period belongs to the interval $(T_1,T_2)$.  
In a similar way, denote by $\Pi_\cR, \Pi_\cR(T), \Pi_\cR(T,T')$  (resp. $\Pi_\cH, \Pi_\cH(T), \Pi_\cH(T,T')$) 
the corresponding subsets of regular (resp. singular) periodic orbits.

In the case of zero potentials, Knieper \cite[(Theorem 1.1, Corollary 1.2, 
 and Proposition 6.3)]{Kni:98}  proved the following result. 
 
\begin{theorem}[Knieper~\cite{Kni:98}] \label{Knieper-thm}
Let $M$ be a smooth compact rank one manifold. %
There exists $\varepsilon>0$, such that for $T$ large enough, 
\begin{equation}\label{different-exponential-growth-rates} 
  \#\Pi_\cR(T)> e^{\varepsilon T}\#\Pi_{\cH}(T)\ge 0\,. 
\end{equation}
Moreover,
\begin{equation}\label{eq:christina}
	\lim_{T\to\infty}\frac1T\log\#\Pi_{\cR}(T)=h_{top}(T^1M)\,.
	\end{equation}
When $M$ is a surface, then $\limsup_{T\to +\infty}\frac{1}{T}\log \#\Pi_{\cH}(T)= 0$.
\end{theorem}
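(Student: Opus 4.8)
Following Knieper~\cite{Kni:98}, the plan is to first produce the \emph{Knieper (Bowen--Margulis) measure} $m$ on $T^1M$ and to establish its structural properties, from which all three assertions then follow. Concretely: (i) build a family of conformal densities $(\mu_x)_{x\in\widetilde M}$ of dimension $h=h_{top}(T^1M)$ on the boundary at infinity of $\widetilde M$, as weak-$\ast$ limits of normalized orbital sums, and prove that $\Gamma$ is \emph{divergent}, i.e.\ $\sum_{\gamma\in\Gamma}e^{-h\,d(x,\gamma x)}=\infty$; the geometric input here is Knieper's shadowing estimate, to the effect that a tubular neighbourhood of fixed radius around a geodesic segment shadows, and is shadowed by, the corresponding dynamical ball; (ii) in Hopf coordinates $T^1\widetilde M\cong\partial^2\widetilde M\times\bR$ form the $\Gamma$- and flow-invariant measure built from $\mu_x\otimes\mu_x$ and Lebesgue time, and push it down to a finite --- hence, after normalization, probability --- measure $m$ on $T^1M$; (iii) using the local product structure and the uniform hyperbolicity available near any rank one periodic vector, show that $h(m)=h$, that $m$ gives full measure to the regular set $\cR$ (indeed to the set of recurrent vectors whose stable and unstable horospheres are transverse), and that every measure of maximal entropy equals $m$.

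Granting (i)--(iii), I would prove \eqref{eq:christina} by a two-sided estimate. For the lower bound $\liminf_{T\to\infty}\frac1T\log\#\Pi_\cR(T)\ge h$ I would fix a rank one periodic vector $v_0\in\supp(m)$ with transverse horospheres and a small neighbourhood $U$ of $v_0$ on which there is a local product structure and a uniform contraction/expansion estimate. For large $T$ the set of $T$-orbit segments that start and end in $U$ has positive $m$-measure, and each such segment can be closed up, via the local product structure, to a genuine \emph{regular} periodic orbit of length $\lesssim T$. Since for $m$-a.e.\ $v$ a dynamical ball of small fixed radius has $m$-measure at most $e^{-(h-\varepsilon)T}$ for all large $T$ (the Brin--Katok entropy formula, using $h(m)=h$), these segments meet at least $e^{(h-2\varepsilon)T}$ pairwise distinct dynamical balls and hence produce at least $e^{(h-2\varepsilon)T}$ pairwise distinct regular closed geodesics; letting $\varepsilon\to0$ gives the bound. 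For the matching upper bound I would use Manning's theorem, valid in nonpositive curvature, that $h_{top}(T^1M)$ equals the volume entropy, which since $M$ is compact equals $\lim_{R\to\infty}\frac1R\log\#\{\gamma\in\Gamma:d(\widetilde x,\gamma\widetilde x)\le R\}$: every free homotopy class carrying a closed geodesic of length $<T$ contains a conjugacy class having a representative $\gamma$ whose axis meets the ball of radius $\diam(M)$ about a fixed lift $\widetilde x$, so $d(\widetilde x,\gamma\widetilde x)<T+2\diam(M)$; hence $\#\Pi(T)$, and a fortiori $\#\Pi_\cR(T)$, is at most $\#\{\gamma\in\Gamma:d(\widetilde x,\gamma\widetilde x)<T+2\diam(M)\}$, whose exponential growth rate is $h$.

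To obtain \eqref{different-exponential-growth-rates} I would reduce to the statement that $\limsup_{T\to\infty}\frac1T\log\#\Pi_\cH(T)$ is strictly smaller than $h$, for then $\#\Pi_\cR(T)$, which grows like $e^{hT}$, dominates $e^{\varepsilon T}\#\Pi_\cH(T)$ for all large $T$ provided $\varepsilon>0$ is small enough. Now $\cH$ is a closed flow-invariant set with $m(\cH)=0$ by (iii); upper semicontinuity of the entropy map on the compact set of invariant measures carried by $\cH$ produces a maximizer $\mu_0$, so that the topological entropy of the geodesic flow restricted to $\cH$ equals $h(\mu_0)$, and this is $<h$ because $\mu_0\neq m$ and $m$ is the unique measure of maximal entropy; a covering argument on the compact set $\cH$ then bounds $\limsup_{T\to\infty}\frac1T\log\#\Pi_\cH(T)$ by this restricted entropy. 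When $M$ is a surface this last step is elementary: the higher rank set is then a finite union of flat cylinders, hence contains only finitely many free homotopy classes of closed geodesics, so $\#\Pi_\cH(T)=O(1)$ and $\limsup_{T\to\infty}\frac1T\log\#\Pi_\cH(T)=0$.

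I expect the main obstacle to be steps (i)--(iii) together with the closing-up step of the lower bound: a rank one manifold possesses no global hyperbolicity, no Anosov closing lemma and no specification property, so one must work with the merely \emph{local} product structure near a rank one periodic orbit, and Knieper's shadowing lemma has to play the role that uniform hyperbolicity plays in Bowen's classical counting argument --- both when establishing divergence of the Poincar\'e series together with uniqueness of the measure of maximal entropy, and when turning an almost-closed orbit segment into a genuine regular periodic orbit.
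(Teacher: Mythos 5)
The paper does not prove this theorem at all: it is quoted from Knieper~\cite{Kni:98}, and your steps (i)--(iii), the closing-up lower bound for regular closed orbits, and the derivation of the gap \eqref{different-exponential-growth-rates} from uniqueness of the measure of maximal entropy together with $m(\cH)=0$, upper semicontinuity of entropy and the fact that $\Pi(T)$ is $(\varepsilon,T)$-separated, are a fair reconstruction of Knieper's route (with the genuinely hard analysis correctly identified as living in the Patterson--Sullivan construction, the rank one local product structure and the closing/shadowing step, plus a standard but unaddressed multiplicity bound when you convert $e^{(h-2\varepsilon)T}$ distinct dynamical balls into that many distinct closed geodesics).

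There is, however, a genuine gap in your treatment of the surface statement. You claim that for a surface ``the higher rank set is a finite union of flat cylinders, hence contains only finitely many free homotopy classes of closed geodesics, so $\#\Pi_\cH(T)=O(1)$''. Nothing supports this. On a surface, $\cH$ is the set of vectors $v$ such that the Gaussian curvature vanishes identically along $\gamma_v$; such geodesics need not be closed, need not bound any flat strip of positive width (the paper's degenerate Example~2 already has a width-zero ``cylinder''), and the zero set of the curvature can a priori carry flat closed geodesics in arbitrarily many distinct free homotopy classes --- there is no elementary finiteness statement, and no area or disjointness argument is available in general (flat closed geodesics may intersect each other, and width-zero families contribute no area). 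This is precisely why the theorem only asserts $\limsup_T \frac1T\log\#\Pi_\cH(T)=0$, i.e.\ subexponential growth of the number of homotopy classes of singular closed geodesics, and why Knieper needs a separate, nontrivial argument (his Proposition~6.3) for it rather than a remark. As written, your last step either proves too much from an unproved structural claim about $\cH$, or, if that claim fails, proves nothing; you must replace it by an actual estimate on the number of homotopy classes of flat closed geodesics of length at most $T$ on a rank one surface.
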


The last statement in Theorem~\ref{Knieper-thm} is not true in higher  
dimensions as Gromov~\cite{Gro:78} provides an example of a compact rank one manifold 
in which the number of closed singular geodesics grows exponentially. 
Theorem~\ref{Knieper-thm} implies 
$\limsup_{T\to +\infty}\frac{1}{T}\log \#\Pi(T)>0$, whence 
\begin{equation}\label{TT1}
	\limsup_{T\to +\infty}\frac{1}{T}\log \#\Pi(T)
	=\limsup_{T\to +\infty}\frac{1}{T}\log \#\Pi(T-1,T)\,.
\end{equation}
Corresponding equalities are true for $\Pi_\cR$  
as well as $\Pi_\cH$ 
instead of $\Pi$. \\

In the proof of Theorem \ref{large-deviations}, we will need the following classical observations in nonpositive curvature. 
Any two unit speed geodesics $\beta_1,\beta_2\colon[0,T]\to\widetilde M$ satisfy for $0\le t \le T$
\[	
	d(\beta_1(t),\beta_2(t))\le d(\beta_1(0),\beta_2(0))+d(\beta_1(T),\beta_2(T))\,.
\]	 
Moreover, for every $\eta>0$ there exist $\rho>0$ and $T_0>0$ such that 
for any two unit speed geodesics $\beta_1,\beta_2\colon [0,T]\to\widetilde M$ with $T>T_0$ 
 satisfying $d(\dot\beta_1(0),\dot\beta_2(0))$, $d(\dot\beta_1(T),\dot\beta_2(T))\le\rho$
 we have $d(\dot\beta_1(t),\dot\beta_2(t))\le\eta$ for all $t\in[0,T]$, where we denote by $d$ the Sasaki distance on $T^1M$ (see next section). 
Further, for every $x\in M$ there is exactly one geodesic arc $\beta_\gamma\colon [0,T]\to M$, 
for some $T>0$, with $\beta_\gamma(0)=x=\beta_\gamma(T)$ in each (non-trivial) homotopy class $\gamma\in\Gamma$.


\subsection{Stable and unstable bundles, Lyapunov exponents and the potential $\varphi^{(u)}$}\label{lyapunov}

We refer particularly to \cite[Chapter IV]{Bal:95} for this subsection. 
 Given a vector $v\in T_pM$, we identify $T_vTM$ with  $T_pM\oplus T_pM$ via the isomorphism
\[
	\Psi\colon\xi\mapsto (d\pi(\xi),C(\xi)),
\]
where $\pi\colon TM\to M$ denotes the canonical projection and $C\colon TTM\to TM$ denotes 
the connection map defined by the Levi Civita connection.
Under this isomorphism we have $T_vT^1M\simeq T_pM\oplus v^\perp$, 
where $v^\perp$ is the subspace of $T_pM$ orthogonal to $v$. 
The vector field that generates the geodesic flow is $V\colon v \mapsto (v,0)$.
The Riemannian metric on $M$ lifts to the {\em Sasaki metric} on $TM$ defined by
\[
 \llangle \xi,\eta\rrangle_v
 = \langle d\pi_v (\xi),d\pi _v(\eta)\rangle_{\pi(v)}
 	+ \langle C_v(\xi),C_v(\eta)\rangle_{\pi(v)}.
 \]
Roughly speaking, two vectors in $T^1M$ are close with respect to  the distance induced by the Sasaki metric 
if their  orbits under the geodesic flow stay close during a fixed interval of time. 

A \emph{Jacobi field}  $J$ (see for example \cite[III.C]{GHL:93}) along a geodesic $\gamma$ is a vector field along $\gamma$  which satisfies the Jacobi equation
\begin{equation}\label{e.j}
 	J''(t) +R(J(t),\dot\gamma(t))\dot\gamma(t) = 0,
\end{equation}
where $R$ denotes the Riemannian curvature tensor of $M$ and $'$ denotes
 covariant differentiation along $\gamma$.  

A Jacobi field $J$ along a geodesic $\gamma$ with $\dot\gamma(0)=v$  is uniquely 
determined by its initial conditions $(J(0), J'(0))\in T_pM\times T_pM$, for $p=\pi(v)$. 
Moreover, the set of Jacobi fields along a geodesic $\gamma$ such that $J(0)$ and $J'(0)$ are orthogonal to $v=\dot\gamma(0)$
 is exactly the set of Jacobi fields such that for all $t$, $J(t)$ is normal to $\dot\gamma(t)$ \cite[Theorem 3.43]{GHL:93}. 
They are called the {\em orthogonal Jacobi fields}. 

Given a vector $v\in TM$, a Jacobi field along the geodesic determined by 
$v$ is uniquely determined by $(J(0),J'(0))\in T_\pi(v)M\oplus T_\pi(v)M\simeq T_vTM$. 
An orthogonal Jacobi field along the geodesic determined by $v$ is uniquely determined by 
$(J(0),J'(0))\in v^\perp\oplus v^\perp \subset T_\pi(v)M\oplus  v^\perp\simeq T_vT^1M$.
Therefore, the set of orthogonal Jacobi fields can be identified with the subbundle of $TT^1M$ 
 whose fiber over $v$ is $v^\perp\oplus v^\perp\in T_pM\oplus T_pM\simeq T_vT^1M$. This fiber is 
the orthogonal 
complement in $T_vT^1M\simeq T_pM\oplus v^\perp$ of the subspace spanned by the vector field $(v,0)$ that generates the geodesic flow.

Jacobi fields give a geometric description of the derivative of the geodesic flow.
Given $\xi\in v^\perp\oplus v^\perp\subset  T_v T^1M$, denote by $J_\xi$ the unique Jacobi field along $\gamma_v$ 
with initial conditions $J_\xi(0)=d\pi_v(\xi)$ and $J_\xi'(0)=C_v(\xi)$, or equivalently $(J_\xi(0),J_\xi'(0))=\Psi(\xi)$.  
Then,  $\Psi(dg^t_v(\xi))$ equals   $(J_\xi(t),J_\xi'(t))$.

Orthogonal stable (unstable) Jacobi fields provide a convenient geometric way of describing the vector
 bundles that by Oseledec theorem correspond to non-positive (non-negative) Lyapunov exponents of the geodesic flow on the unit tangent bundle. 
As curvature is nonpositive, the function $t\mapsto \lVert J(t)\rVert$ is convex~\cite[IV, Lemma 2.3]{Bal:95}.
 An (orthogonal)  Jacobi field $J$ along a geodesic is called {\em stable} 
(resp.~{\em unstable}) if $\lVert J(t)\rVert$ is bounded for all $t \geq 0$ (resp.  bounded for all $t \leq 0$). 
Let $J^s$ (resp. $J^u$) denote the set of 
stable (resp. unstable) orthogonal Jacobi fields  and introduce the subspaces
\[
	F^s_v\eqdef\{\xi\in T_vT^1M\colon J_\xi\in J^s\},\quad
	F^u_v\eqdef\{\xi\in T_vT^1M\colon J_\xi\in J^u\}\,.	
\]
Each such subspace in $v^\perp\times v^\perp$ has dimension $n-1$. 
The distributions $F^s\colon v\in T^1M\mapsto F^s_v\subset T_vT^1M$ and 
$F^u\colon v\in T^1M\mapsto F_v^u\subset T_v T^1M$ obtained in this way 
are invariant and continuous (but rarely have higher regularity). 
The subbundle $F^s_v$
 (resp. $F^u_v$) coincides with the space of vectors $\xi\in v^\perp\times v^\perp\subset T_vT^1M$ 
such that $\lVert dg^t_v(\xi)\rVert$ is uniformly bounded for all $t\ge0$ (resp. bounded for all $t\le0$). 

A vector  $\xi$ belongs to $F^s_v\cap F^u_v$  if and only if $t\mapsto \|J_\xi(t)\|$
is constant (as a convex bounded map), or in other words iff
  the function $t\mapsto \lVert dg^t_v(\xi)\rVert$ is constant. 
One says in this case that $J_\xi$ is a {\em parallel Jacobi field} along $\gamma_v$.
When  $M$ is a surface, then $F^s_v\cap F^u_v$ is nontrivial
 if and only if $F^s_v=F^u_v$, that is if and only if the sectional curvature along $\gamma_v$ is everywhere zero.
In general, both subbundles will have nonzero intersection at some vectors $v\in T^1M$.
 In fact, the geodesic flow is Anosov precisely if and only if 
the intersection is zero at \emph{every} vector~\cite{Ebe:73}.

Orthogonal Jacobi fields provide a \emph{continuous} vector bundle 
that defines the following \emph{continuous} potential which is of great importance for many thermodynamic properties of the flow.
Let $\Jac(dg^t|_{F^u_v})$ be the Jacobian of the linear map $dg^t_v\colon F^u_v\to F^u_{g^t(v)}$ 
and consider the geometric potential defined by
\begin{equation}\label{phidef}
	\varphi^{(u)}(v)
	\eqdef -\frac{d}{dt}\Jac(dg^t|_{F^u_v})|_{t=0} 
	= -\lim_{t\to0}\frac1t\log\Jac(dg^t|_{F^u_v})\,,
\end{equation}
which is well-defined and depends differentiably on $F^u_v$ and hence continuously on $v$. 
Moreover, in restriction to  each basic set, the map $v\to F^u_v$ is H\"older continuous,
and hence the restriction of $\varphi^{(u)}$ to such a basic set is also H\"older continuous. 
Indeed, since in the uniformly hyperbolic case $F^s_v$ (resp. $F^u_v$)  coincides with the stable (unstable) 
subspace in the hyperbolic splitting of the tangent bundle, by~\cite[Theorem 19.1.6]{KH:} these spaces vary H\"older continuously in $v$.
  Further, $\varphi^{(u)}$ vanishes on $\cH$ because the norm of any unstable 
Jacobi field is constant along geodesics  in $\cH$.

The Lyapunov exponents of the geodesic flow are well defined for all {\em Lyapunov regular} vectors $v$. 
The set of Lyapunov regular vectors is of full measure with respect to   any invariant probability measure 
 (see for example the appendix of \cite{KH:}). 
For such a {\em Lyapunov regular} vector $v$, classical computations give
$$
 \sum_{i\colon\lambda_i(v)\ge0}\lambda_i(v)
 = \lim_{T \to \infty}  \frac1T\log\,\Jac( dg^T|_{F^u_v})
 = \lim_{T \to \infty} - \frac1T \int_0^T \varphi^{(u)}(g^t(v))\,dt.
$$
Ruelle's inequality \cite{Rue:78} asserts that for all invariant probability measures $\mu$, 
we have 
$$
h(\mu)\le \int_{T^1M}\sum_{i\colon\lambda_i(v)\ge0}\lambda_i(v)\,d\mu\,,
$$
where $h(\mu)$ is the entropy of the measure $\mu$ with respect to the time one of the geodesic flow. 
With the above and~\eqref{def-variational-pressure}, it
 ensures that for all invariant probability measures $\mu\in\cM$  we have 
 \[
 	P_{\cM}(\varphi^{(u)})
	= \sup_{\mu\in\cM}\left(h(\mu)-\int_{T^1M}\sum_{i:\lambda_i(v)\ge 0}\lambda_i(v)\,d\mu\,\right)\le 0.
\]	 
Let $\widetilde m$  be the restriction of the Liouville measure to the invariant open set $\cR$  normalized into a probability measure. 
 In all known examples $\widetilde m$ coincides with the Liouville measure, but this has not been proved in general.  
Ergodicity of $\widetilde m$ was proved in~\cite{Pes:77}. 
By Pesin's formula~\cite{Pes2:77} $h(\widetilde m)= -\int_{T^1M}\varphi^{(u)}\,d\widetilde m$, 
so that 
\begin{equation}\label{pesinn}
	P_{\cM}(\varphi^{(u)})=0,
\end{equation}	 
and the Liouville measure $\widetilde m$ is an equilibrium state for $\varphi^{(u)}$. 
Observe however that when the manifold has flat strips,
  the potential $\varphi^{(u)}$ vanishes on the flat strips. 
In particular, if there are periodic orbits in the flat strips, 
the normalized Lebesgue measure on any such higher rank periodic orbit is also an equilibrium state for $\varphi^{(u)}$.

Finally, in the particular case when $M$ is a surface, 
there exists at most one positive Lyapunov exponent $\chi(v)=-\varphi^{(u)}(v)$.


\section{Pressure and equilibrium states}\label{pressure}

We introduce first several notions of pressure as well as the Poincar\'e series; in Section~\ref{sec:proofT1} 
we prove Theorem \ref{equality-pressure-surfaces}. 
In this section, let $\varphi\colon T^1M\to\bR$ be a continuous map.

\subsection{Variational and topological pressure}\label{sec:3.1}

The \emph{variational pressure} of $\varphi$ (with respect to the flow)
 was defined in~\eqref{def-variational-pressure}  as 
\[
	P_{\cM}(\varphi)
	=\sup_{\mu\in\cM} \left(h(\mu)+\int_{T^1M}\varphi\,d\mu\right).
\]

For $\varepsilon>0$ and $T>0$, a set $E\subset T^1M$ is  
 \emph{$(\varepsilon,T)$-separated} if for all $v,w\in E$, $v\ne w$,
 we have $\max_{0\le t\le T} d(g^t v, g^tw)\ge \varepsilon$. 
Define  
$$
	Z^{sep}_\varphi(T,\varepsilon)
	=\sup_E \sum_{v\in E} e^{\int_0^T\varphi(g^t v)\,dt}\,,
$$
where the supremum is taken over all $(\varepsilon,T)$-separated sets $E$.
The {\em topological pressure} (which, in fact, should rather be called the \emph{metric pressure}) 
of $\varphi$ with respect to the geodesic flow is defined to be the following limit
$$
	P_{top}(\varphi)
	\eqdef\lim_{\varepsilon\to 0}\limsup_{T\to +\infty}\frac{1}{T}\log Z^{sep}_\varphi(T,\varepsilon)
	\,.
$$
Bowen and Ruelle~\cite{BowRue:75} observed  that this definition is equivalent to defining
$P_{top}(\varphi )$ as the topological pressure of the function $\varphi^1\colon v \mapsto \int_0^1\varphi(g^t(v))\,dt$
 with respect to the time-$1$ map $g^1$ of the flow (see~\cite{Wal:81}). 
The variational principle \cite[Theorem 9.10]{Wal:81} ensures that 
$$
	P_{top}(\varphi)=
	P_{\cM}(\varphi)\,.
$$

The \emph{topological entropy} of the geodesic flow, denoted for simplicity   $h=h_{top}(T^1M)$,
 is the topological pressure of the potential $\varphi=0$.
The topological pressure (resp. entropy) of the geodesic flow restricted to any compact set $\Lambda\subset T^1M$
is denoted by $P_{top}(\varphi,\Lambda)$ (resp. $h_{top}(\Lambda)$).  


\subsection{Gurevic Pressure}\label{sec:3.2}

The \emph{Gurevic pressure}, or \emph{periodic orbit pressure}, is defined   as  follows: 
\begin{equation}\label{def:Gur}
	P_{Gur}(\varphi)
	\eqdef\limsup_{T\to +\infty}\frac{1}{T}\log \sum_{\beta\in \Pi(T-1,T)}
		e^{\int\varphi\,d\nu_\beta}\,.
\end{equation}
Recall that $\Pi(T-1,T)$ is the set of {\em primitive periodic orbits} of length between $T-1$ and $T$. 
In a similar way, we define the {\em regular Gurevic pressure} as  
\begin{equation}\label{def:Gur-R}
	P_{Gur,\cR}(\varphi)
	\eqdef\limsup_{T\to +\infty}\frac{1}{T}\log \sum_{\beta\in \Pi_\cR(T-1,T)}
		e^{\int\varphi\,d\nu_\beta}\,.
\end{equation}
It is clear that $P_{Gur,\cR}(\varphi)\le P_{Gur}(\varphi)$. 
Knieper~\cite{Kni:98} proved the following fact. 

\begin{lemma}\label{lem:injinj}
	For all $\varepsilon>0$ smaller than the injectivity radius of $M$ the set $\Pi(T)$ is $(\varepsilon,T)$-separated.
\end{lemma}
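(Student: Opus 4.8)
The plan is to prove Lemma~\ref{lem:injinj} by a direct geometric argument in the universal cover $\widetilde M$. Fix $\varepsilon>0$ strictly smaller than the injectivity radius $\inj(M)$ of $M$, and suppose for contradiction that two distinct periodic orbits $\beta_1,\beta_2\in\Pi(T)$ fail to be $(\varepsilon,T)$-separated, i.e.\ there exist $v_i\in\beta_i$ with $d(g^tv_1,g^tv_2)<\varepsilon$ for all $t\in[0,T]$, where $d$ is the Sasaki distance on $T^1M$. The idea is that such two orbits must then coincide: the periodicity of $\ell(\beta_i)\le T$ lets us "wrap around" the whole period inside the window $[0,T]$, and the injectivity radius bound upgrades Sasaki-closeness on $T^1M$ to an actual coincidence of geodesics.

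First I would lift the picture to the universal cover. Let $\widetilde v_i\in T^1\widetilde M$ be lifts of $v_i$, chosen so that the footpoints $p_i=\pi(\widetilde v_i)$ satisfy $d(p_1,p_2)<\varepsilon$ (possible since the downstairs Sasaki distance, hence in particular the footpoint distance, is below $\varepsilon$, and $\varepsilon<\inj(M)$ so there is a canonical local lift). Each $\widetilde v_i$ is a periodic vector in the sense that there is a hyperbolic isometry $\gamma_i\in\Gamma$, conjugate to the deck transformation representing $\beta_i$, with axis the geodesic $\gamma_{\widetilde v_i}$ through $\widetilde v_i$ and translation length $\ell(\beta_i)$. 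Since $\varepsilon$ is below the injectivity radius, the assumption $d(g^tv_1,g^tv_2)<\varepsilon$ for $t\in[0,T]$ lifts to $d(g^t\widetilde v_1,g^t\widetilde v_2)<\varepsilon$ for $t\in[0,T]$, with a single consistent choice of lifts along the whole segment. In nonpositive curvature the function $t\mapsto d(\gamma_{\widetilde v_1}(t),\gamma_{\widetilde v_2}(t))$ is convex (this is the convexity recalled in Section~\ref{periodic-orbits}, coming from \cite[IV, Lemma 2.3]{Bal:95}), and it is bounded by $\varepsilon$ on $[0,T]$.

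Next I would exploit periodicity to kill the convex function. The deck transformation $\gamma_i$ translates $\gamma_{\widetilde v_i}$ by $\ell(\beta_i)\le T$, so on $[0,\ell(\beta_1)]$ the distance $t\mapsto d(\gamma_{\widetilde v_1}(t),\gamma_{\widetilde v_2}(t))$ agrees, after applying $\gamma_1$, with the same convex function translated by $\ell(\beta_1)$; iterating, the convex function is bounded on all of $[0,\infty)$ by $\varepsilon$, hence (being convex and bounded above) nonincreasing, and by the symmetric argument with $\gamma_i^{-1}$ on $(-\infty,0]$ it is also nondecreasing, so it is \emph{constant}, equal to some $c\in[0,\varepsilon)$. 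A constant-distance pair of geodesics in nonpositive curvature bounds a flat strip of width $c$; but more importantly, projecting back to $M$, the distance between the two closed geodesics downstairs is the constant $c<\varepsilon<\inj(M)$. Now I invoke the injectivity radius: two closed geodesics on $M$ that stay within $\inj(M)$ of one another, being freely homotopic along the short connecting segments (which are unique in their homotopy class, cf.\ the last sentence of Section~\ref{periodic-orbits}), represent conjugate elements of $\Gamma$; and two length-minimizing closed geodesics in the same free homotopy class, by the convention fixed in Section~\ref{periodic-orbits} (unique in the regular case, and a single chosen representative in the singular case), give the same orbit in $\Pi$. Hence $\beta_1=\beta_2$, contradicting $\beta_1\ne\beta_2$.

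The main obstacle I anticipate is the careful bookkeeping of lifts: one must ensure that "$d<\varepsilon$ on $[0,T]$ upstairs, not merely downstairs, with \emph{one} coherent choice of lift $\widetilde v_2$ relative to $\widetilde v_1$" genuinely follows from $\varepsilon<\inj(M)$ — this is where the injectivity radius is really used, via a continuity/connectedness argument along the segment $t\in[0,T]$ — and that the translation-by-period trick is applied to the correct lift so the convex function is controlled on the whole real line rather than just a bounded window. Everything else (convexity of $t\mapsto d(\gamma_{\widetilde v_1}(t),\gamma_{\widetilde v_2}(t))$, the rigidity of bounded convex functions, passing from constant distance to conjugacy in $\Gamma$, and then to equality of orbits via Knieper's minimizing-geodesic convention) is routine once the lift is set up correctly.
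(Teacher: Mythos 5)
Your reduction to a convex distance function in $\widetilde M$ is the right starting point, but the ``wrap around'' step is not valid, and it is exactly the crux of the lemma. Write $\widetilde c_i$ for the lifted geodesics and $\gamma_i$ for the deck transformations translating them by $\ell(\beta_i)$. The isometry $\gamma_1$ translates $\widetilde c_1$, but it does \emph{not} translate $\widetilde c_2$: it sends it to a different lift of $\beta_2$. So from $d(\widetilde c_1(t),\widetilde c_2(t))<\varepsilon$ on $[0,T]$ you may conclude that $\widetilde c_1(t+\ell(\beta_1))=\gamma_1\widetilde c_1(t)$ is $\varepsilon$-close to $\gamma_1\widetilde c_2(t)$, but not to $\widetilde c_2(t+\ell(\beta_1))$; the single convex function $t\mapsto d(\widetilde c_1(t),\widetilde c_2(t))$ is therefore only controlled on the bounded window $[0,T]$, and a convex function bounded on a bounded interval need not be constant. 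Your iteration tacitly assumes $\gamma_1\widetilde c_2(t)=\widetilde c_2(t+\ell(\beta_1))$, i.e.\ that both lifted geodesics are axes of one and the same deck transformation --- which is essentially the conclusion (that $\beta_1,\beta_2$ determine the same conjugacy class) and cannot be used as an input.

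The gap is harmless only when $\ell(\beta_1)=\ell(\beta_2)=\ell$: then already downstairs $t\mapsto d(g^tv_1,g^tv_2)$ is $\ell$-periodic, hence $<\varepsilon$ for \emph{all} $t$; lifting the (unique, since $\varepsilon<\inj(M)$) short connecting geodesics gives one lift $\widetilde c_2$ with $d(\widetilde c_1(t),\widetilde c_2(t))<\varepsilon$ on all of $\mathbb{R}$, convexity makes this constant, and a second use of the injectivity radius --- the displacement bound $d(x,\gamma x)\ge 2\inj(M)>2\varepsilon$ for every $\gamma\neq e$ --- identifies $\gamma_1\widetilde c_2(\cdot)$ with $\widetilde c_2(\cdot+\ell)$, hence $\gamma_1=\gamma_2$, and the orbits coincide in $\Pi$ by the paper's choice of one representative per free homotopy class. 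When $\ell(\beta_1)\neq\ell(\beta_2)$ there is no common period, the closeness cannot be propagated beyond $[0,T]$ at all, and your scheme yields nothing; what one does get, e.g.\ $d\bigl(x,\gamma_2\gamma_1^{-1}x\bigr)\le\lvert\ell(\beta_2)-\ell(\beta_1)\rvert+2\varepsilon$ at a suitable point $x$, forces $\gamma_1=\gamma_2$ only when the two periods are within $2(\inj(M)-\varepsilon)$ of each other. This unequal-period case is where the real content lies: closed geodesics in classes such as $\alpha^{N}\gamma$ and $\alpha^{N+1}\gamma$ fellow-travel with the \emph{same} time parameter for more than a full period of each, so it cannot be dismissed, and your proposal does not address it. Note finally that the paper offers no proof of this lemma (it is quoted from \cite{Kni:98}), so the only meaningful comparison is with what a complete argument must contain --- and, as written, yours does not close this gap.
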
 
For any continuous potential $\varphi$, we deduce that
$$
P_{Gur}(\varphi)\le P_{top}(\varphi)\,.
$$  
When $\varphi$ is the constant function $\varphi=0$, Theorem~\ref{Knieper-thm} gives
\begin{equation}\label{besicov}
	P_{Gur,\cR}(0)= P_{Gur}(0)=P_{top}(0)=h.
\end{equation}

\subsection{Pressure on basic sets}\label{sec:3.3}

 A flow-invariant set $\Lambda$ is \emph{hyperbolic} if the tangent bundle restricted 
to $\Lambda$ can be written as the Whitney sum of  $dg^t$-invariant subbundles 
$T_\Lambda T^1M=E^s\oplus E\oplus E^u$ where $E$ is the one-dimensional bundle 
tangent to the flow, and there are positive constants $c,\alpha$ such that 
$\lVert dg^t_v(\xi)\rVert\le ce^{-\alpha t}\lVert\xi\rVert$ for $\xi\in E^s_v$, $t\ge0$ 
and $\lVert dg^{-t}_v(\xi)\rVert\le ce^{-\alpha t}\lVert\xi\rVert$ for $\xi\in E^u_v$, $t\ge0$. 
Such a set $\Lambda\subset  T^1M$ is  \emph{locally maximal} if there 
exists a neighborhood $U$ of $\Lambda$ such that $\Lambda=\bigcap_{t\in\bR}{g}^t(\overline U)$. 
The flow ${G}|_\Lambda$ is \emph{topologically transitive} if for all nonempty open sets $U$ and $V$ intersecting $\Lambda$ 
there exists $t\in\bR$ such that ${g}^t(U)\cap V\cap \Lambda\ne\emptyset$.
A {\em basic set} is a compact locally maximal hyperbolic set on which the flow is transitive.

In restriction to any basic set $\Lambda$, in restriction to which the geodesic flow is topologically mixing, 
for $\varphi\colon\Lambda\to\bR$ H\"older continuous, all above introduced pressures of the flow
coincide.
The proof of this classical fact (see~\cite[18.5.1 and 20.3.3]{KH:}  for diffeomorphisms 
and~\cite[Lemma 2.8]{Fra:77}  for flows) uses the {\em specification property}. 
This property holds for geodesic flows of compact negatively curved manifolds~\cite{Ebe:73trans}.
On rank one manifolds, in restriction to any basic set $\Lambda$, by~\cite[(3.2)]{Bow:72b} 
exactly one of the following two distinct cases is true:  
(a) $g_{|\Lambda}$ is a time $\tau$-suspension of an axiom A$\ast$ homeomorphism or, 
(b) $\Lambda$ is C-dense (the unstable manifold of every periodic point in $\Lambda$ is dense in $\Lambda$). 
In case (a) the suspended homeomorphism is topologically mixing and verifies the corresponding pressure 
identities and the pressure of the suspension flow with \emph{constant} time ceiling function does 
not alter these identities for the flow. 
In case (b), by~\cite[(3.8)]{Bow:72b} the flow satisfies the specification property.
We will then denote this quantity shortly by 
\begin{equation}\label{eq:allsame}
	P(\varphi,\Lambda)
	\eqdef P_{Gur}(\varphi,\Lambda)
	= P_{top}(\varphi,\Lambda)
	= P_{\cM}(\varphi,\Lambda)
\end{equation}
Observe further that $P(\varphi,\Lambda_1)\le P(\varphi,\Lambda_2)$ if $\Lambda_1\subset \Lambda_2$. 

When $M$ is a rank one {\em surface}, by~\cite[Theorem 1.5]{BurGel:14} 
there is a family of basic sets 
$\Lambda_1\subset\Lambda_2\subset\cdots\subset \cR$ such that 
 $\bigcup_k\Lambda_k$ is dense in $T^1M$ 
and that for any basic set $\Lambda\subset\cR$, $\Lambda\ne\cR$ 
there exists $k\ge1$ such that $\Lambda\subset \Lambda_k$. 
Moreover, by construction, any regular periodic orbit is 
eventually contained in $\Lambda_k$ for $k$ large enough.
Observe however that the singular periodic orbits
are not contained in these $\Lambda_k$, so that
 $P_{Gur}(\varphi,\Lambda_k)=P_{Gur,\cR}(\varphi,\Lambda_k)$.  
Finally, by~\cite[Theorem 6.2]{BurGel:14}, if  
$\cH\ne\emptyset$ and $\varphi|_{\cH}$ is constant then
we have
\begin{equation}\label{ea:limits}
		P_{top}(\varphi) 
		= \sup_{k\ge 1}P(\varphi,\Lambda_k)
		= \lim_{k\to\infty}P(\varphi,\Lambda_k)
\end{equation}



\subsection{Critical Exponent of Poincar\'e series}\label{sec:3.4}

For a given $x\in\widetilde{M}$, and $\gamma\in\Gamma$, denote by $\int_x^{\gamma x}\widetilde{\varphi}$ the integral of the $\Gamma$-invariant 
lift $\widetilde{\varphi}$ of $\varphi$ to $T^1\widetilde{M}$ 
along the unique lift to $T^1\widetilde{M}$ of the geodesic segment  joining $x$ to $\gamma x$. 
The \emph{Poincar\'e series} associated to $\Gamma$, $\varphi$, $x$, and  $s\in\mathbb{R}$ is  defined by
$$
	Z_{\Gamma,\varphi,x,s}
	\eqdef\sum_{\gamma\in\Gamma}
	e^{\int_x^{\gamma x}\widetilde{\varphi}-s d(x, \gamma x)}\,.
$$
The {\em critical exponent} $\delta_{\Gamma,\varphi,x}$ is defined by the fact that the series diverges when $s<\delta_{\Gamma,\varphi,x}$ and
converges when $s>\delta_{\Gamma,\varphi,x}$. 
Define the sequence $(a_n(x))_{n\in\bN}$ as
\[
	a_n(x)
	\eqdef\sum_{\gamma\in\Gamma,\,n-1\le d(x,\gamma x)<n}
		e^{\int_{x}^{\gamma x}\widetilde{\varphi}}\,.
\]	 
An elementary computation shows that the series $Z_{\Gamma,\varphi,x,s}$ converges (diverges) if, and only if, 
the series $\sum_{n\in\bN} a_n(x)e^{-sn}$ converges (diverges), so that 
$$
\delta_{\Gamma,\varphi,x}
=\limsup_{n\to +\infty}\frac{1}{n}\log \sum_{\gamma\in\Gamma,\,n-1\le d(x,\gamma x)<n}e^{\int_{x}^{\gamma x}\widetilde{\varphi}}\,.
$$
A simple computation shows that when  $\delta_{\Gamma,\varphi,x}>0$, it also satisfies
\begin{equation}\label{eq:deltaaa}
	\delta_{\Gamma,\varphi,x}
	=\limsup_{n\to +\infty}\frac{1}{n}\log 
		\sum_{\gamma\in\Gamma,  d(x,\gamma x)\le n}
		e^{\int_{x}^{\gamma x}\widetilde{\varphi}}\,.
\end{equation}

\begin{remark}
\rm Contrarily to the negative curvature case,
 the critical exponent $\delta_{\Gamma,\varphi,x}$ does depend  on the point $x$, and it does not 
seem possible to remove this dependance by an elementary reasoning. 
Observe however the following facts:\\
$\bullet$ For all $x\in \widetilde{M}$ we have $\lvert\delta_{\Gamma,\varphi,x}-\delta_{\Gamma,0,x}\rvert\le \lVert\varphi\rVert_\infty$. \\
$\bullet$ $\delta_{\Gamma,0,x}$ is independent of $x$ and coincides with the topological entropy of the geodesic flow, which is finite.\\
$\bullet$ The map $x\mapsto \delta_{\Gamma,\varphi,x}$ is 
$\Gamma$-invariant. When  $\varphi$ is continuous,  the map
$x\mapsto \delta_{\Gamma,\varphi,x}$ is moreover continuous on $\widetilde{M}$. 
It induces therefore a continuous, and therefore uniformly continuous, map on the quotient manifold $M$. 

The last point is elementary to check, thanks to the 
uniform continuity of the lift $\widetilde{\varphi}$ of $\varphi$ to $T^1\widetilde{M}$, and
to the following geometric fact: in nonpositive curvature, 
 given $\eta>0$ and $x,y$ satisfying $d(y,x)<\eta$,  
 the geodesic segments $[x,\gamma x]$ and $[y,\gamma y]$ stay at distance less than $\eta$.
\end{remark}
By the above remark, the following quantity is well defined and finite
\[
	\delta_{\Gamma,\varphi}
	\eqdef \max_{x\in\widetilde{M}}\delta_{\Gamma,\varphi,x}\,.
\]


\subsection{Examples of compact rank one manifolds}\label{examples}
We want to mention some examples and discuss the hypotheses in our main results.

\begin{example}
A compact connected nonpositively curved manifold on which every geodesic eventually crosses the negatively curved part of the manifold is a compact rank one manifold where $\cH=\emptyset$. By a result of Eberlein~\cite{Ebe:73}, its geodesic flow is Anosov (and
 our results are already well-known in this case). The typical example is a 
 manifold where the curvature is negative everywhere except in a sufficiently small disk where it is equal to zero (Figure~\ref{fig:1}).  
\begin{figure}
\begin{overpic}[scale=.45,
  ]{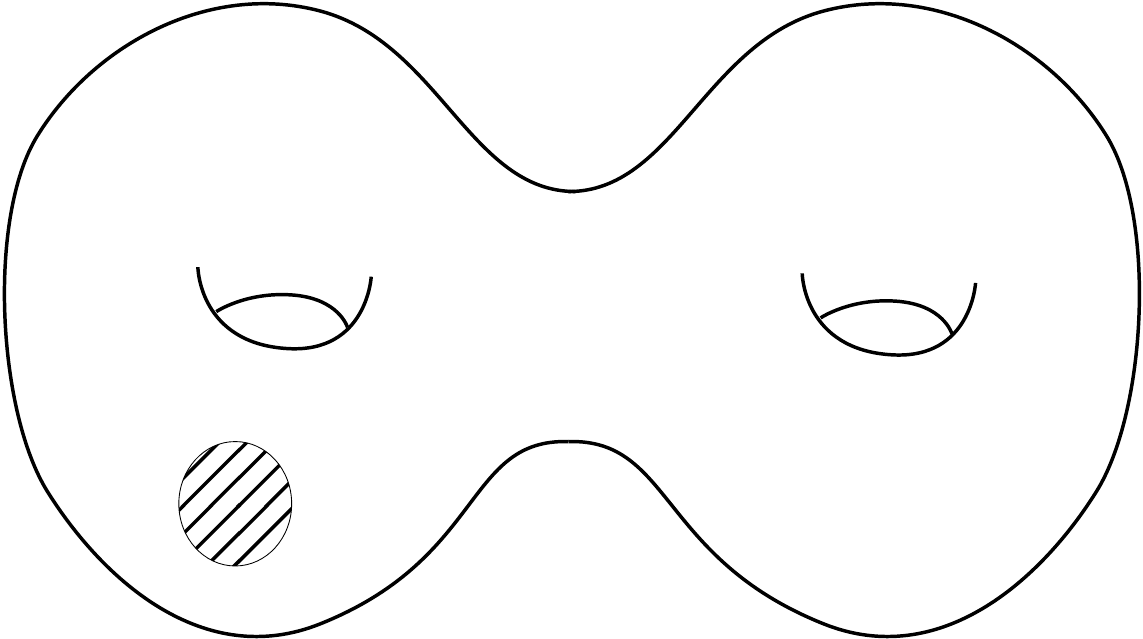}
        \put(6,40){\tiny curvature$<0$}
      \put(27,10){\tiny curvature$=0$}
\end{overpic}
\caption{Nonpositively curved surface with Anosov geodesic flow}
\label{fig:1}
\end{figure}
\end{example}

\begin{example}\label{ex:2}
Second, the simplest example of a compact rank one surface is a compact connected surface $S$ of genus $g\ge 2$ which has a periodic euclidean cylinder of positive width somewhere and has negative curvature elsewhere. 
The set $\mathcal{H}$ of higher rank vectors is the set of vectors whose geodesic stays all the time in the flat cylinder. In the `degenerate' case where the flat cylinder has width equal to zero, the set $\cH$ is reduced to a single periodic orbit (Figure~\ref{fig:2}). 
\begin{figure}
\begin{overpic}[scale=.45,
  ]{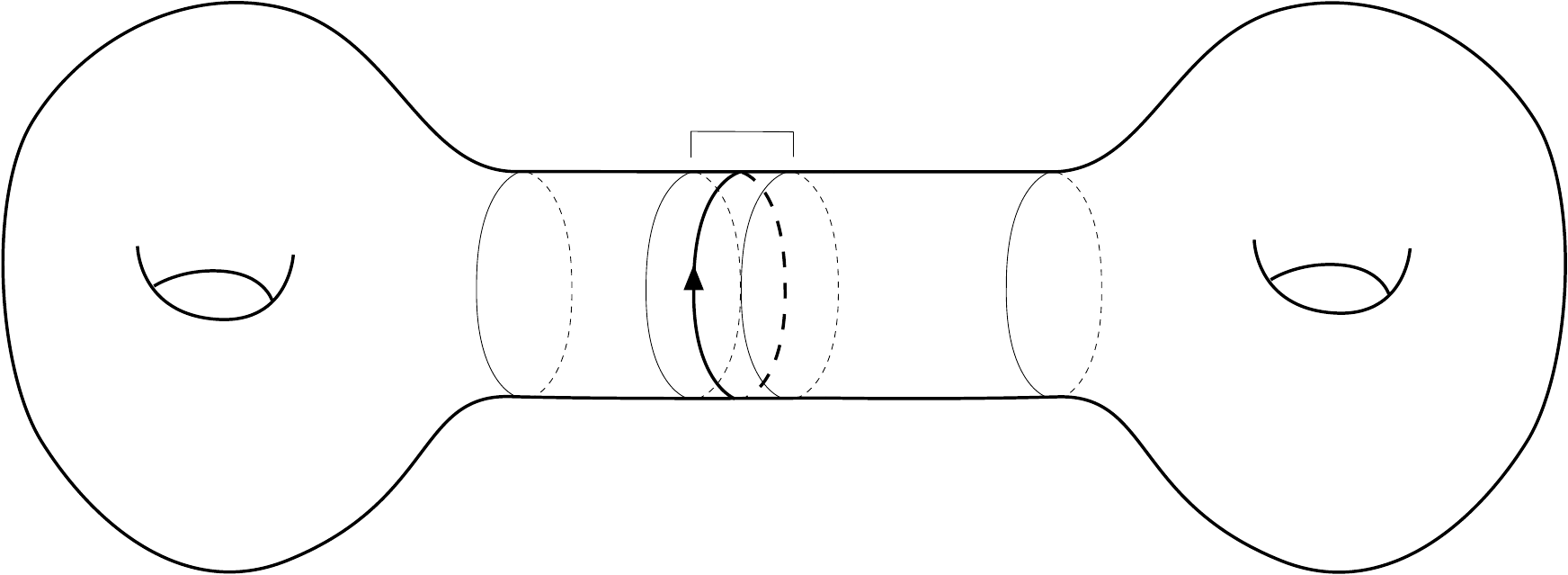}
      \put(45,8){\tiny$\beta_0$}
      \put(46,30){\tiny$\varepsilon_0$}
\end{overpic}
\caption{Surface with periodic euclidean cylinder}
\label{fig:2}
\end{figure}
\end{example}

\begin{example}\label{ex:3}
\smallskip
Finally, let us mention examples introduced by Gromov \cite{Gro:78} and further studied by  Knieper \cite{Kni:98} where $\#\Pi_\cH(T)$ can have exponential growth in $T$.  
First, consider a hyperbolic punctured torus, and   modify the neighbourhood of the puncture so that it becomes isometric to a 
flat cylinder $I\times C_1$, where $I$ is an interval and $C_1$ a circle. Call this punctured and flattened torus $T_1$.
Consider the three dimensional manifold $M_1=T_1\times C_2$, where $C_2$ is another circle. The boundary is the  product (flat) torus $C_1\times C_2$ of two circles. 
Consider another such manifold $M_2=C'_1\times T_2$, where $T_2$ is also a flattened punctured torus, $C'_1$ is isometric to $C_1$ and $C'_2=\partial T_2$ to $C_2$. 
Now glue $M_1$ and $M_2$ along their boundaries by identifying $C_1\times C_2$ with $C'_1\times C'_2$. 
The resulting manifold (compare Figure~\ref{fig:3}) is a compact connected three-dimensional rank one manifold, 
whose singular periodic orbits have exponential growth \cite{Kni:98}.
Indeed, one observes that the product of a periodic orbit in $T_1$ and a point in $C_2$ is a periodic orbit of $M_1$ of rank two. 
Therefore, there is an injective map from the set of periodic orbits of $T_1$ into the set of  singular periodic orbits of $M_1$. 
Moreover, the length of a periodic orbit in $T_1$ is at most the length of the periodic geodesic in the same homotopy class in the hyperbolic punctured torus. 
The exponential growth rate of periodic orbits on the hyperbolic punctured torus implies therefore the exponential growth rate of singular periodic orbits
of $M_1$. 
\begin{figure}
\begin{overpic}[scale=.45,
  ]{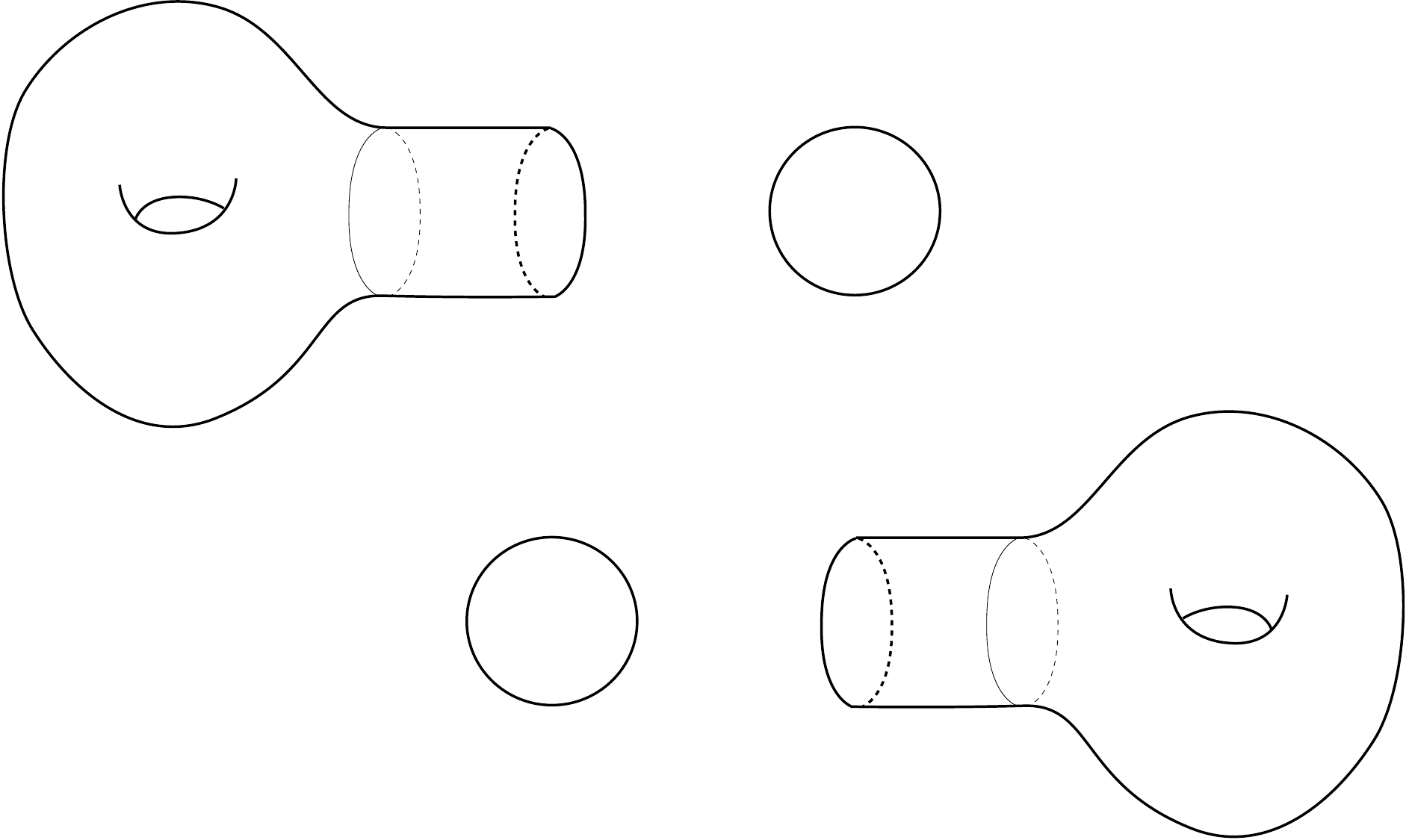}
      \put(-12,45){$M_1$}
      \put(105,14){$M_2$}
      \put(32,53){\tiny$I$}
      \put(38,53){\tiny$C_1$}
      \put(59,53){\tiny$C_2$}
      \put(67,5){\tiny$I$}
      \put(59,5){\tiny$C_2'$}
      \put(38,5){\tiny$C_1'$}
      \put(46,44){$\times$}
      \put(49,15){$\times$}
      \put(38,30){$\updownarrow$}
      \put(59,30){$\updownarrow$}
\end{overpic}
\caption{Gromov's example}
\label{fig:3}
\end{figure}
\end{example}

\smallskip
Focusing on the Example~\ref{ex:2}, we would like to discuss the hypotheses of our main results.
Let us develop a little bit more the case where the euclidean cylinder is of positive width, say of width $L$. Denote by $h$ the topological entropy of the flow. Choose a (higher rank)  vector $v_0$  tangent to a periodic geodesic inside the cylinder. 
We will define a potential $\varphi\colon T^1S\to\bR$ that will not satisfy hypotheses and conclusions of Theorem \ref{equality-pressure-surfaces}. 

Choose $\varphi$ to be constant along the orbit $\beta_0$ of $v_0$ and satisfying $\varphi(v_0)=(1+\eta)\,h>h$ for some  $\eta>0$. 
Choose some $\varepsilon>0$ such that $2\varepsilon/L< \eta/(1+\eta)$. 
Let $V_\varepsilon(v_0)$ be the set of unit vectors in $T^1S$ such that the distance from their basepoint to the closed geodesic associated to $v_0$ is less than $\varepsilon/2$, and whose angle with the orbit of $v_0$ is less than $\varepsilon/2$. 
By an elementary euclidean argument already used in \cite{CS:11}, if $v$ is any unit tangent vector whose basepoint is outside the  cylinder, 
the proportion of time spent by the piece of orbit $(g^t(v))_{0\le t\le T}$ inside $V_\varepsilon(v_0)$ is at most $\varepsilon/L$. 
Now extend the potential $\varphi$ continuously to $T^1S$ in such a way that $\varphi\ge 0$ and $\varphi\equiv 0$ outside $V_\varepsilon(v_0)$. 

Observe that this potential $\varphi$ does not satisfy assumption (\ref{con-pot}) of Theorem \ref{equality-pressure-surfaces}.
If $\beta$ is a rank one periodic orbit then the above remark shows that 
\[
	\int \varphi\, d\nu_\beta
	\le \frac{\varepsilon}{L}\lVert\varphi\rVert_\infty\,\ell(\beta)
	=\frac{\varepsilon}{L}\,(1+\eta)h\,\ell(\beta)
	\,.
\]	 
We deduce easily that 
\[
	P_{Gur,\cR}(\varphi)\le \frac{\varepsilon}{L}\|\varphi\|_\infty+h
	<\frac{\eta}{2(1+\eta)}\|\varphi\|_\infty+h=h(\frac{\eta}{2}+1)\,.
\]	
On the other hand, the topological pressure of $\varphi$ certainly satisfies
\[
	P_{top}(\varphi)
	=P_\cM(\varphi)\ge \int \varphi\,d\widehat{\nu}_{\beta_0}
	=h(1+\eta)> P_{Gur,\cR}(\varphi)\,,
\]
where $\widehat{\nu}_{\beta_0}$ is the normalized periodic measure supported by $\beta_0$. 
In particular, it proves that equalities~\eqref{eqtheoremsurface} in Theorem \ref{equality-pressure-surfaces} do not hold. 

Now, let $\mu$ be a flow-invariant measure with support in the set $\cR$ of regular vectors, and consider a generic recurrent vector $v$ based outside the flat cylinder. 
We can find $T$ large enough such that the ergodic average on $(g^t(v))_{0\le t\le T}$ is very close to $\mu$, so that in particular, 
\[
	\left\lvert\int\varphi\,d\mu-\frac{1}{T}\int_0^T\varphi(g^t(v))\,dt\right\rvert
	\le \varepsilon\,.
\]	 
But the same argument as above shows that this piece of orbit $(g^t(v))_{0\le t\le T}$ 
cannot stay more than a proportion $\varepsilon/L$ inside $V_\varepsilon(v_0)$. Therefore, 
  $\int\varphi\,d\mu<\varepsilon+\lVert\varphi\rVert_\infty\varepsilon/L$, so that $h(\mu)+\int\varphi\,d\mu\le  \varepsilon+h(1+\eta/2)$. 
If $\varepsilon$ is small enough, this quantity is bounded from above by $h(1+\eta)$, so that
\[
	P_{top}(\varphi)>\sup_{\mu\in\cM(\cR)}\left( h(\mu)+\int\varphi\,d\mu\right)\,.
\]	
In particular, it implies that $P_{top}(\varphi)=\|\varphi\|_\infty=h(1+\eta)$ and the periodic orbit measure $\widehat\nu_{\beta_0}$ is an equilibrium state for $\varphi$. This equilibrium state is unique when $\varphi$ is chosen to be strictly decreasing in the neighborhood of $v_0$. (In the other case, it could happen that 
other periodic orbits in $V_\varepsilon(v_0)$ are also equilibrium states.)

As our definition of Gurevic pressures involves only primitive periodic orbits, and there is a unique primitive periodic orbit in $\Pi_\cH(T)$, we
see that $P_{Gur}(\varphi)=P_{Gur,\cR}(\varphi)$ even though~\eqref{con-pot} is not satisfied. 
If our definitions of Gurevic pressures were modified to take into account all periodic orbits (not only primitive ones) then the above example would lead to a regular Gurevic pressure strictly less than the full Gurevic pressure. Moreover, the latter would be attained on singular periodic orbits and would be equal to the topological pressure.  

To conclude on this example, observe that $\varphi$ does not satisfy neither the assumption~\eqref{propotpot} of Theorem~\ref{large-deviations}, nor its conclusions.

It would be interesting to adapt Example~\ref{ex:2} with the cylinder to Gromov's Example~\ref{ex:3} to provide an example for the strict inequalities
$P_{Gur,\cR}(\varphi)<P_{Gur}(\varphi)<P_{top}(\varphi)$.

\subsection{Proof of Theorem \ref{equality-pressure-surfaces}}\label{sec:proofT1}

As each of the above quantities $\ccP=P_{\cM}$, $P_{top}$, $P_{Gur},$ 
and $P_{Gur,\cR}$ satisfy $\ccP(\varphi+c)=\ccP(\varphi)+c$ for every $c\in\mathbb{R}$, 
without loss of generality, in the following we can assume that all such pressures are positive. 

By~\cite[Lemma 4]{Pol:96}, assumption $P_{top}(\varphi)>0$, and~\eqref{eq:deltaaa}, 
for all $x\in \widetilde{M}$ we have 
\[
	\delta_{\Gamma,\varphi,x}
	=\limsup_{n\to +\infty}\frac{1}{n}\log 
		\sum_{\gamma\in\Gamma,  d(x,\gamma x)\le n}
		e^{\int_{x}^{\gamma x}\widetilde{\varphi}}
	\le P_{top}(\varphi)
\]

In 
\cite[Proof of Proposition 2, page 161]{Pol:96}, 
Pollicott proves (without stating explicitely) that 
$P_{Gur}(\varphi)\le \sup_{x\in \widetilde{M}}\delta_{\Gamma,\varphi,x}=\delta_{\Gamma,\varphi}$. 
His proof is written for $\varphi$ Lipschitz, but his argument is valid for uniformly continuous potentials. 
As $T^1M$ is compact, any continuous potential is uniformly continuous, so that its arguments apply.  
This together with the above shows the first claim~\eqref{inequality-pressure-manifolds} in Theorem \ref{equality-pressure-surfaces}.

Consider  $\varepsilon_0$ as defined in~\eqref{defesspilon}. 
 Let us   prove that  under the hypothesis~\eqref{con-pot} 
we have $P_{Gur, \cR}(\varphi)\ge P_{Gur}(\varphi)$.  
Up to replacing  $\varphi$ by $\varphi-\inf_{\mu\in\cM(\cR)}\int\varphi\,d\mu$, we can assume that $\inf_{\mu\in\cM(\cR)}\int\varphi\,d\mu = 0$.
The hypothesis then becomes $\max_{\mu\in\cM(\cH)}\int\varphi\,d\mu<\varepsilon_0$. 
As $\int\varphi\,d\mu\ge 0$ for all $\mu\in\mathcal{M}(\cR)$, using~\eqref{besicov} we get
\[
	P_{Gur}(\varphi)\ge P_{Gur, \cR}(\varphi)\ge P_{Gur, \cR}(0)
= h >0\,.
\]  
In particular, all pressures are positive, as required at the beginning of the proof. 
The fact that $P_{Gur,\cR}(\varphi)>0$ then implies that 
\begin{equation}\label{equality-gurevic} 
P_{Gur,\cR}(\varphi)
=\limsup_{T\to +\infty}\frac{1}{T}\log \sum_{\beta\in\Pi_\cR(T-1,T)}e^{\int\varphi\,d\nu_\beta}
= \limsup_{T\to +\infty}\frac{1}{T}\log \sum_{\beta\in\Pi_\cR( T)}e^{\int\varphi\,d\nu_\beta}\,.
\end{equation}
As   $P_{Gur}(\varphi)>0$,  the above equalities also 
hold for $P_{Gur}$ with the sums taken over $\Pi(T-1,T)$ and $\Pi(T)$, respectively. 
In the case $\varphi=0$, together with~\eqref{eq:christina} it gives 
$$
	\limsup_{T\to +\infty}\frac{1}{T}\log \#\Pi_{\cR}(T-1,T)
	=\lim_{T\to +\infty}\frac{1}{T}\log\#\Pi_\cR(T)=h>0\,.
$$ 
Observe that
\begin{eqnarray*}
	\sum_{\beta\in \Pi( T)} e^{\int \varphi\,d\nu_\beta}
	&=&\sum_{\beta\in \Pi_{\cR}( T)} e^{\int \varphi\,d\nu_\beta}
		+ \sum_{\beta\in \Pi_\cH( T)} e^{\int\varphi\,d\nu_\beta}\\
	&\le& \sum_{\beta\in\Pi_{\cR}( T) }e^{\int \varphi\,d\nu_\beta}   
		+ e^{T\max_{\mu\in\cM(\cH)}\int\varphi\,d\mu}\#\Pi_\cH( T)\,.
\end{eqnarray*} 
Choose $0<\delta <  \varepsilon_0-\sup_{\mu\in\cM(\cH)}\int\varphi\,d\mu$. 
By Theorem~\ref{Knieper-thm}, for $T$ sufficiently large 
we have $\#\Pi_\cH( T)\le e^{-T (\varepsilon_0-\delta)}\#\Pi_\cR(T)$.
 As $\inf_{\mu\in\cM(\cR)}\int\varphi\,d\mu = 0$, the above terms can be estimated further by
\begin{eqnarray*}		
	&\le& \sum_{\beta\in \Pi_{\cR}( T) } e^{\int \varphi\,d\nu_\beta}  
		+ e^{T\max_{\mu\in\cM(\cH)}\int\varphi\,d\mu}
			e^{-T(\varepsilon_0-\delta)}\#\Pi_\cR(T) \\
&\le& \sum_{\beta\in \Pi_{\cR}( T) } e^{\int \varphi\,d\nu_\beta}  
		+ e^{T\max_{\mu\in\cM(\cH)}\int\varphi\,d\mu}
			e^{-T(\varepsilon_0-\delta)}\sum_{\beta\in\Pi_\cR(T)}
				e^{\int\varphi\,d\nu_\beta} \\
	&=& \sum_{\beta\in \Pi_{\cR}( T) } e^{\int \varphi\,d\nu_\beta} 
		 \left(1+e^{T(\max_{\mu\in\cM(\cH)}\int\varphi\,d\mu -\varepsilon_0+\delta)}\right) \,.
\end{eqnarray*} 

As   $\max_{\mu\in\cM(\cH)}\int\varphi\,d\mu<\varepsilon_0-\delta$, considering the limsup of $\frac{1}{T}\log$ of the above quantities
leads to  $P_{Gur}(\varphi)\le P_{Gur,\cR}(\varphi)$. This proves~\eqref{equality-Gurevic}. 

Consider now the case where $M$ is a rank one surface, where we can apply results of \cite{BurGel:14}. 
Assume in addition that $\varphi$ is H\"older and that $\varphi|_{\mathcal H}$ is constant.
In restriction to any basic set $\Lambda_k$, \eqref{eq:allsame} holds.
Obviously, periodic orbits of the geodesic flow restricted to $\Lambda_k$
are in $\Pi_{\mathcal{R}}\subset \Pi$. 
Thus, naturally we have 
$P(\varphi,\Lambda_k)\le P_{Gur,\cR}(\varphi)\le 
P_{Gur}(\varphi)\le \sup_{x\in\widetilde{M}} \delta_{\Gamma,\varphi,x}=
\delta_{\Gamma,\varphi} \le P_{top}(\varphi)=P_{\cM}(\varphi)$ by what precedes. 
Further, by~\eqref{ea:limits} this lower bound converges to $P_{top}(\varphi)$.
This proves~\eqref{eqtheoremsurface} and finishes the proof of Theorem \ref{equality-pressure-surfaces}.
\qed


\section{Equilibrium states}\label{section-equilibrium-states}

Given a potential, it is interesting to identify (ergodic) equilibrium measures, as they reflect 
the dynamics weighted by the potential.

In our situation, the existence of equilibrium states follows immediately from the upper semi-continuity~\cite{Wal:81} 
of the entropy map $\mu\mapsto h(\mu)$ on the set of
flow-invariant probability measures since the geodesic flow is smooth~\cite{New:89}. 
By Bowen~\cite[Theorem 3.5]{Bow:72}, 
this upper semi-continuity can be deduced from  the fact that the geodesic flow 
is $h$-expansive~\cite[Proposition 3.3]{Kni:98}.  
 
In general, the uniqueness of equilibrium states can be deduced from 
the existence of a {\em Gibbs measure} (see \cite[Section 20.3]{KH:}).
On the other hand, when the pressure map $\varphi\mapsto P_{\cM}(\varphi)$ is 
differentiable at $\varphi$ in every direction or in a set of directions that is dense 
in the weak topology~\cite[Corollary 3.6.14]{PrzUrb:10}
then  there exists also a unique equilibrium state. 
But we are still not able to apply one of these two strategies to rank one geodesic flows.

Classical arguments now lead us to the following results.  
We will always assume that $\varphi$ is H\"older continuous and $\varphi|_\cH$ is constant.

\begin{remark}\label{equilibrium} 
	Let $M$ be a smooth compact rank one surface and consider an increasing family 
of basic sets $(\Lambda_k)_{k\in\bN}$ as provided in~\cite{BurGel:14}. 
With respect to the restriction of the geodesic flow to $\Lambda_k$, the potential $\varphi$ 
admits a unique equilibrium state that we denote by $\mu_{\varphi,k}$. 
Then any accumulation point (with respect to the weak$\ast$ topology) of the sequence of 
measures $(\mu_{\varphi,k})_{k\in\bN}$ is an equilibrium state for the potential $\varphi$ (with respect to the flow on $T^1M$). 
\end{remark} 

\begin{remark}\label{periodic-orbit-converge-to-equilibrium} 
	Given $\beta\in\Pi(T)$, denote by $\nu_\beta$ is the Lebesgue measure on the 
periodic orbit $\beta$ and $\widehat\nu_\beta=\ell(\beta)^{-1}\nu_\beta$ the normalized (probability) measure. 
	In   \cite[Theorem 9.10]{Wal:81}, 
Walters shows that the accumulation points of weighted averages 
of Dirac measures on $(\varepsilon,T)$-separated sets that approximate
 well the topological pressure are equilibrium measures for $\varphi$. 
By Lemma~\ref{lem:injinj}, for every sufficiently small $\varepsilon>0$ 
and for all $T>0$, the set $\Pi(T-1,T)$ of periodic orbits of length approximately $T$ 
is $(\varepsilon,T)$-separated. Thus, by Theorem \ref{equality-pressure-surfaces}
 when $M$ is a smooth rank one {\em surface}, these sets $\Pi(T-1,T)$ allow to approximate the topological pressure. 
In this situation, any accumulation point of 
the following weighted averages on periodic orbits 
$$
\frac{\sum_{\beta\in\Pi(T-1,T)}e^{\int \varphi\,d\nu_\beta}
\,\widehat\nu_\beta}{\sum_{\beta\in\Pi(T-1,T)}e^{\int
\varphi\,d\nu_\beta}},
$$
is an equilibrium measure  of $\varphi$ (with respect to the flow on $T^1M$). 	
\end{remark}


\section{Hyperbolic Potentials}\label{sec:potentialscon}

In this section we discuss assumptions~\eqref{con-pot} and~\eqref{propotpot}. 
For $t>0$ define $\varphi^t\colon v\mapsto\int_0^t\varphi(g^\tau(v))\,d\tau$. 
We first show some preliminary result based on classical arguments (see for example~\cite{InoRiv:12}) that we repeat for completeness.

\begin{lemma}
	$\displaystyle
	\max_{\mu\in\cM(\cH)}\int\varphi\,d\mu
	= \lim_{t\to\infty}\max_{v\in \cH}\frac 1t\varphi^t(v)
	= \inf_{t>0}\max_{v\in \cH}\frac 1t\varphi^t(v) $.
\end{lemma}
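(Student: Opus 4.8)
The plan is to prove the chain of equalities by a combination of subadditivity (for the existence of the limit and its identification with the infimum) and a variational/ergodic argument (to match the limit with the maximal integral over $\cH$-supported invariant measures). Write $a(t)\eqdef\max_{v\in\cH}\varphi^t(v)$. The key observation is that $\cH$ is compact and flow-invariant, so $\varphi^t(v)$ is well-defined for $v\in\cH$ and $a(t)$ is attained. First I would check subadditivity: for $v\in\cH$ and $s,t>0$,
\[
	\varphi^{s+t}(v)=\varphi^s(v)+\varphi^t(g^s(v)),
\]
and since $g^s(v)\in\cH$ as well, we get $\varphi^{s+t}(v)\le a(s)+a(t)$, hence $a(s+t)\le a(s)+a(t)$. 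By Fekete's subadditive lemma, $\lim_{t\to\infty}\frac1t a(t)=\inf_{t>0}\frac1t a(t)$, which gives the second equality in the statement. (One should note $a(t)$ is bounded below by $-t\lVert\varphi\rVert_\infty$, so the infimum is finite, and continuity of $t\mapsto a(t)$ is not needed since Fekete works for discrete subadditive sequences after a standard interpolation using $\lvert a(t)-a(\lfloor t\rfloor)\rvert\le\lVert\varphi\rVert_\infty$.)

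Next I would establish $\max_{\mu\in\cM(\cH)}\int\varphi\,d\mu\le\inf_{t>0}\frac1t a(t)$. For any $\mu\in\cM(\cH)$ and any $t>0$, invariance gives
\[
	\int\varphi\,d\mu=\frac1t\int\varphi^t\,d\mu\le\frac1t\max_{v\in\cH}\varphi^t(v)=\frac1t a(t),
\]
and taking the infimum over $t$ and the supremum over $\mu$ yields the inequality. (The maximum on the left is attained by upper semicontinuity of $\mu\mapsto\int\varphi\,d\mu$ on the compact set $\cM(\cH)$.) For the reverse inequality $\lim_{t\to\infty}\frac1t a(t)\le\max_{\mu\in\cM(\cH)}\int\varphi\,d\mu$, I would pick for each $t$ a maximizing vector $v_t\in\cH$ and form the empirical measures $\mu_t\eqdef\frac1t\int_0^t\delta_{g^\tau(v_t)}\,d\tau$, which are probability measures supported on $\cH$ with $\int\varphi\,d\mu_t=\frac1t\varphi^t(v_t)=\frac1t a(t)$. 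By weak$\ast$ compactness of the space of probability measures on the compact set $\cH$, pass to a subsequence $\mu_{t_k}\to\mu_\infty$; the standard Krylov--Bogolyubov argument shows $\mu_\infty$ is flow-invariant, and it is supported on $\cH$ since each $\mu_{t_k}$ is. Then $\int\varphi\,d\mu_\infty=\lim_k\int\varphi\,d\mu_{t_k}=\lim_k\frac1{t_k}a(t_k)=\lim_{t\to\infty}\frac1t a(t)$, which gives the desired bound. Combining the three displays closes the cycle of inequalities and proves all equalities.

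The main obstacle is a minor technical one rather than a conceptual one: one must be careful that $\cH$ is genuinely compact and flow-invariant so that all the maxima are attained and the empirical measures stay supported on $\cH$ in the limit — this is where compactness of $M$ (hence of $T^1M$, hence of the closed invariant set $\cH$) is used. A second point requiring slight care is the interpolation from integer $t$ to real $t$ in Fekete's lemma, but this is handled uniformly by $\lvert\varphi^t(v)-\varphi^{\lfloor t\rfloor}(v)\rvert\le\lVert\varphi\rVert_\infty$ for all $v$, so $\frac1t a(t)$ and $\frac1{\lfloor t\rfloor}a(\lfloor t\rfloor)$ have the same limit. No specification or hyperbolicity is needed here; the result holds for an arbitrary continuous $\varphi$ and any compact invariant set in place of $\cH$.
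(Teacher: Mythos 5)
Your proof is correct and takes essentially the same route as the paper: the inequality $\max_{\mu\in\cM(\cH)}\int\varphi\,d\mu\le\inf_{t>0}\max_{\cH}\frac1t\varphi^t$ via flow-invariance of $\mu$, and the reverse inequality via empirical measures along maximizing orbits in the compact flow-invariant set $\cH$ together with a Krylov--Bogolyubov limit. The only difference is your appeal to Fekete's subadditive lemma, which the paper does not need because closing the cycle $\max_{\mu}\le\inf_{t}\le\liminf_{t}\le\limsup_{t}\le\max_{\mu}$ already forces the limit to exist and equal the infimum; your extra step is harmless.
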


\begin{proof}
Given $\mu\in\cM$, by flow invariance, for $t>0$  we have
\[
	\frac1t\int_{T^1M} \varphi^t\,d\mu 
	= \frac1t\int_{T^1M} \int_0^t \varphi\circ g^s\,ds\,d\mu
	= \frac1t\int_0^t \int_{T^1M} \varphi\circ g^s\,d\mu\,ds
	=\int_{T^1M}\varphi\,d\mu\,.
\]
Thus
\[
	\int_{T^1M} \varphi\,d\mu \le \max_{\cH}\frac1t \varphi^t\,.
\]
Taking the supremum over  $\mu\in\cM(\cH)$, and then the infimum over   $t>0$ gives
\[
	\max_{\mu\in\cM(\cH)}\int\varphi\,d\mu \le \inf_{t>0}\max_{\cH}\frac1t \varphi^t\,.
\]
It remains to show  the opposite inequality. Given $n\ge1$ choose $v_n$ in the compact 
invariant set $\cH$ such that the function $\frac1n\varphi^n$ attains its maximum in $v_n$. 
Consider the probability measure $\nu_n$ defined by
\[
	\int\psi\,d\nu_n\eqdef\frac1n\int_0^n\psi(g^s(v_n))\,ds
	\quad\text{ for every }\psi\in C^0(T^1M,\mathbb{R})\,.
\]
Choose a subsequence $(n_k)_{k\ge1}$ of positive integers such that 
\[
	\lim_{k\to\infty}\frac{1}{n_k}\varphi^{n_k}(v_{n_k})
	=\limsup_{n\to\infty}\frac1n\varphi^n(v_n)
	=\limsup_{n\to\infty}\max_\cH\frac1n\varphi^n	\,.
\]
Possibly taking a subsequence, the sequence of measures $(\nu_{n_k})_{k\ge1}$ 
converges in the weak$\ast$ topology to an invariant probability measure $\mu$ supported in $\cH$. We obtain
\[
	\int\varphi\,d\mu
	=\lim_{k\to\infty}\int\varphi\,d\nu_{n_k}
	= \lim_{k\to\infty}\frac{1}{n_k}\varphi^{n_k}(v_{n_k})
	=\limsup_{n\to\infty}\max_\cH\frac1n\varphi^n\,.
\]
This together with the above proves the claim.
\end{proof}

The following provides an immediate stronger version of condition~\eqref{con-pot}.

\begin{corollary}
	Any continuous potential $\varphi\colon T^1M\to\bR$ satisfying
	\[
		\max_\cH\frac1t\varphi^t<\inf_\cR\frac1t\varphi^t+\varepsilon_0
	\]
	for some $t>0$ also satisfies condition~\eqref{con-pot}.
\end{corollary}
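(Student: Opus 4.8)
The plan is to derive the corollary directly from the preceding lemma together with the analogous (easier) statement for $\cR$. First I would observe that the lemma gives
\[
	\max_{\mu\in\cM(\cH)}\int\varphi\,d\mu
	=\inf_{t>0}\max_{v\in\cH}\frac1t\varphi^t(v)
	\le\max_{v\in\cH}\frac1t\varphi^t(v)
\]
for \emph{every} fixed $t>0$, simply because the infimum over $t$ is bounded above by any particular value. So the left-hand side of \eqref{con-pot} is controlled by $\max_\cH\frac1t\varphi^t$ for the chosen $t$.

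Next I would establish the companion inequality on the regular side, namely
\[
	\inf_{\mu\in\cM(\cR)}\int\varphi\,d\mu
	\ge\inf_{v\in\cR}\frac1t\varphi^t(v)
\]
for every $t>0$. This is the ``easy direction'' of the same averaging computation: for any $\mu\in\cM(\cR)$, flow-invariance gives $\int\varphi\,d\mu=\frac1t\int\varphi^t\,d\mu\ge\inf_\cR\frac1t\varphi^t$, and taking the infimum over $\mu\in\cM(\cR)$ yields the claim. (Note one does not need the reverse inequality here, so no compactness/limit argument is required for this half — $\cR$ being non-compact is not an obstacle.) Combining the two displayed inequalities: if the hypothesis $\max_\cH\frac1t\varphi^t<\inf_\cR\frac1t\varphi^t+\varepsilon_0$ holds for some $t>0$, then
\[
	\max_{\mu\in\cM(\cH)}\int\varphi\,d\mu
	\le\max_{v\in\cH}\frac1t\varphi^t(v)
	<\inf_{v\in\cR}\frac1t\varphi^t(v)+\varepsilon_0
	\le\inf_{\mu\in\cM(\cR)}\int\varphi\,d\mu+\varepsilon_0,
\]
which is exactly condition~\eqref{con-pot}.

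There is essentially no main obstacle here: the content is entirely in the preceding lemma, and what remains is the trivial bookkeeping that an infimum over a parameter is $\le$ any sampled value, plus the one-sided averaging estimate on $\cR$. The only point worth a word of care is that the averaging identity $\int\varphi\,d\mu=\frac1t\int\varphi^t\,d\mu$ is valid for all invariant $\mu$ (it was proved in the lemma by Fubini and flow-invariance), so it applies to $\mu\in\cM(\cR)$ as well, and the resulting inequality $\int\varphi\,d\mu\ge\inf_\cR\frac1t\varphi^t$ only uses the pointwise bound $\varphi^t\ge\inf_\cR\varphi^t$ on $\mathrm{supp}\,\mu\subset\overline{\cR}$ — here one should note $\inf_\cR$ and $\inf_{\overline\cR}$ agree by continuity of $\varphi^t$, so the estimate survives passing to the closed support. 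With that remark the proof is a two-line chain of inequalities.
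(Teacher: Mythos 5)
Your proof is correct and is exactly the ``immediate'' argument the paper intends (the paper states the corollary without proof as a direct consequence of the lemma): bound $\max_{\mu\in\cM(\cH)}\int\varphi\,d\mu$ by $\max_\cH\frac1t\varphi^t$ via the lemma, and bound $\inf_{\mu\in\cM(\cR)}\int\varphi\,d\mu$ from below by $\inf_\cR\frac1t\varphi^t$ via the averaging identity. The only superfluous point is your closing remark about closed supports: since $\mu\in\cM(\cR)$ gives full measure to $\cR$, the pointwise bound $\varphi^t\ge\inf_\cR\varphi^t$ on $\cR$ already suffices when integrating, with no need to pass to $\overline{\cR}$.
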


We now study condition~\eqref{propotpot}.
Given continuous potentials $\varphi\colon T^1M\to\bR$, consider  
\[
	\alpha(\varphi)\eqdef P(\varphi)-\max\varphi^1.
\]
The potential $\varphi$ is said to be \emph{hyperbolic} if there exists   $t>0$ such that 
\[
	 t\,P(\varphi)-\max \varphi^t>0\,.
\]
Two continuous potentials $\varphi,\psi\colon T^1M\to\bR$ are said to be \emph{co-homologous} 
(with respect to the flow) if there exists a continuous function $\eta\colon T^1M\to\bR$ 
 such that $\varphi^t-\psi^t=\eta\circ g^t-\eta$ for every $t$. 
This is equivalent to the fact that $\varphi-\psi=\lim_{t\to0}(\eta\circ g^t-\eta)/t$.  

In the following, we    require that $\alpha(\varphi)>0$. 
This seems a very restrictive hypothesis. However, following~\cite[Proposition 3.1]{InoRiv:12}
 verbatim, in the case of a flow, we get the following equivalences. 

\begin{lemma}
The following facts are equivalent:
\begin{itemize}
		\item $\alpha(\varphi)>0$.
		\item The potential $\varphi$ is hyperbolic.
		\item The metric entropy of each equilibrium state of $\varphi$ is strictly positive.
		\item There exists a continuous potential $\psi$ co-homologous to $\varphi$ such that $\alpha(\psi)>0$.
		\item Every continuous potential co-homologous to $\varphi$ is hyperbolic.
	\end{itemize}
\end{lemma}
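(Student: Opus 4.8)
The plan is to follow \cite[Proposition 3.1]{InoRiv:12} verbatim, adapting the notation from the discrete-time setting to the flow setting, using $\varphi^t$ in place of Birkhoff sums and $P(\varphi)=P_\cM(\varphi)$ in place of the pressure of a continuous map. The logical skeleton is a cycle of implications establishing all five statements are equivalent; I would prove it as a chain plus a couple of short additional arcs, relying on: the variational principle $P_\cM(\varphi)=P_{top}(\varphi)$, the existence of equilibrium states (from upper semi-continuity of the entropy map, as recalled in Section~\ref{section-equilibrium-states}), the invariance identity $\int\varphi^t\,d\mu=t\int\varphi\,d\mu$ already proved in the preceding lemma of this section, and the fact that co-homologous potentials have the same pressure and the same equilibrium states (immediate from $\varphi^t-\psi^t=\eta\circ g^t-\eta$, since $\int\varphi\,d\mu=\int\psi\,d\mu$ for every invariant $\mu$).

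First I would show $\alpha(\varphi)>0 \Rightarrow$ the metric entropy of every equilibrium state is positive: if $m$ is an equilibrium state then $h(m)+\int\varphi\,dm=P_\cM(\varphi)$ and $\int\varphi\,dm=\int\varphi^1\,dm\le\max\varphi^1$, so $h(m)\ge P_\cM(\varphi)-\max\varphi^1=\alpha(\varphi)>0$. Next, the metric entropy of every equilibrium state being positive $\Rightarrow \varphi$ hyperbolic: pick any equilibrium state $m$; for each $t>0$ one has $t\,P(\varphi)-\max\varphi^t \ge t\,P(\varphi)-\int\varphi^t\,dm = t\,P(\varphi)-t\int\varphi\,dm = t\,h(m)$ using the invariance identity, and since $h(m)>0$ this is positive for every $t>0$, in particular for some $t>0$. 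Then $\varphi$ hyperbolic $\Rightarrow \alpha(\varphi)>0$: if $t\,P(\varphi)-\max\varphi^t>0$ for some $t>0$, apply the Bowen--Ruelle observation that $P(\varphi)$ equals the pressure of $\varphi^1$ for the time-one map, hence of $\frac1t\varphi^t$ for the time-$t$ map (equivalently $t\,P(\varphi)=P_{g^t}(\varphi^t)$); combined with the standard bound $P_{g^t}(\varphi^t)\le h_{top}(g^t)+\max\varphi^t$ one gets $h_{top}(g^t)\ge t\,P(\varphi)-\max\varphi^t>0$, i.e. $t\,h>0$; but more directly the point is that the three conditions $\alpha(\varphi)>0$, hyperbolicity, and positive entropy of equilibrium states are pairwise equivalent once one observes (via the invariance identity and the supremum defining $P_\cM$) that $t\,P(\varphi)-\max\varphi^t = t\sup_{\mu\in\cM}\big(h(\mu)+\int\varphi\,d\mu\big)-\max\varphi^t$ and each equilibrium state witnesses the needed strictness. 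Finally I would close the co-homology arcs: $\alpha(\varphi)>0 \Rightarrow$ there exists $\psi\sim\varphi$ with $\alpha(\psi)>0$ is trivial (take $\psi=\varphi$); $\psi\sim\varphi$ with $\alpha(\psi)>0 \Rightarrow \varphi$ hyperbolic, because co-homologous potentials share the same pressure ($P(\psi)=P(\varphi)$) and the same equilibrium states, so positivity of $\alpha(\psi)$ forces positive entropy of every equilibrium state of $\varphi$, hence hyperbolicity by the above; and $\varphi$ hyperbolic $\Rightarrow$ every $\psi\sim\varphi$ is hyperbolic by the same pressure-and-equilibrium-state invariance, running the first three implications for $\psi$.

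The main obstacle I anticipate is purely bookkeeping rather than mathematical: one must be careful about the distinction between $\alpha(\varphi)>0$ (a condition on $t=1$) and hyperbolicity (existence of \emph{some} $t>0$), and verify that sub/super-additivity of $t\mapsto\max\varphi^t$ — which gives $\max\varphi^{s+t}\le\max\varphi^s+\max\varphi^t$ — together with $P$ being linear in $t$ under the suspension renormalization makes ``for some $t$'' equivalent to ``for $t=1$''. Concretely, if $t\,P(\varphi)-\max\varphi^t>0$ then by Fekete's lemma applied to the subadditive sequence $n\mapsto\max\varphi^{nt}$ one gets $\lim_n \frac1n\max\varphi^{nt}=\inf_n\frac1n\max\varphi^{nt}\le\frac1{k}\max\varphi^{kt}$ for large $k$, bringing the per-unit-time defect $P(\varphi)-\frac1{nt}\max\varphi^{nt}$ arbitrarily close to its positive limit, and then a further short argument (splitting $\varphi^s$ over an integer number of blocks of length $nt$ plus a bounded remainder, and letting the block length grow) upgrades this to $P(\varphi)-\max\varphi^1>0$. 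This Fekete-type upgrade is the one step where I would actually need to write a small estimate; everything else is formal manipulation of the variational principle and the co-homology relation.
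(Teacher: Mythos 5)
The paper offers no written argument here --- it invokes \cite[Proposition 3.1]{InoRiv:12} ``verbatim, in the case of a flow'' --- so your proposal has to reconstruct that proof, and two of your arcs are broken. First, in the arc ``positive entropy of every equilibrium state $\Rightarrow$ hyperbolic'' your key inequality is reversed: since $\int\varphi^t\,dm\le\max\varphi^t$, one has $t\,P(\varphi)-\max\varphi^t\le t\,P(\varphi)-\int\varphi^t\,dm=t\,h(m)$, not $\ge$, so exhibiting an equilibrium state with positive entropy gives only an upper bound on $t\,P(\varphi)-\max\varphi^t$ and proves nothing. The correct route (the one in \cite{InoRiv:12}) is by contraposition and is exactly where the preceding lemma of this section enters: if $\varphi$ is not hyperbolic, then $P(\varphi)\le\inf_{t>0}\max_{T^1M}\frac1t\varphi^t$, and the lemma (run with $T^1M$ in place of $\cH$, via empirical measures along a maximizing sequence for $\varphi^{t_n}$ and a weak$\ast$ limit) identifies this infimum with $\max_{\mu\in\cM}\int\varphi\,d\mu$; any maximizing $\mu$ then satisfies $h(\mu)+\int\varphi\,d\mu\le P(\varphi)\le\int\varphi\,d\mu$, hence is an equilibrium state with $h(\mu)=0$. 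This compactness step is the missing idea in your cycle.

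Second, the ``Fekete upgrade'' from hyperbolicity at some $t$ to $\alpha(\varphi)>0$ cannot work, because subadditivity points the wrong way: $\max\varphi^{n}\le n\max\varphi^{1}$ gives $\max\varphi^{1}\ge\inf_{t>0}\frac1t\max\varphi^{t}$, so knowing $t_0P(\varphi)>\max\varphi^{t_0}$ yields no upper bound on $\max\varphi^{1}$; an orbit can accumulate a large value of $\varphi^1$ on a short excursion through a region where $\varphi$ is large and never return, and no block-splitting of $\varphi^s$ into long pieces can control this. This is precisely why the Inoquio-Renteria--Rivera-Letelier statement formulates the $t=1$ condition through a \emph{cohomologous} potential (``there exists $\psi$ co-homologous to $\varphi$ with $\alpha(\psi)>0$''): the genuinely nontrivial implication is the construction of the transfer function $\eta$ producing such a $\psi$ from hyperbolicity, and your proposal never addresses it --- you only use the trivial direction $\psi=\varphi$, while the arcs that were supposed to close the cycle are the two defective ones above. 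The unproblematic parts of your plan (equilibrium states have $h(m)\ge\alpha(\varphi)$ when $\alpha(\varphi)>0$; co-homologous potentials have the same pressure and the same equilibrium states since $\int(\eta\circ g^t-\eta)\,d\mu=0$ for invariant $\mu$) are fine, but by themselves they do not yield the equivalences.
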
	

As one of our main interests is the unstable Jacobian $\varphi^{(u)}$ defined in~\eqref{phidef}, we observe the following.

\begin{lemma}\label{lem:11} 
	For all $t<1$ we have $\alpha(t\varphi^{(u)})>0$.
\end{lemma}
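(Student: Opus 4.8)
The plan is to combine the normalization $P_\cM(\varphi^{(u)})=0$ from~\eqref{pesinn} with the fact that, on a surface, $\varphi^{(u)}=-\chi\le 0$ pointwise, where $\chi(v)$ is the (unique) nonnegative Lyapunov exponent, to control $\alpha(t\varphi^{(u)})=P_\cM(t\varphi^{(u)})-\max_{T^1M}(t\varphi^{(u)})^1$ for $t<1$. The key observation is that $\max_{T^1M}(t\varphi^{(u)})^1 = t\max_{T^1M}(\varphi^{(u)})^1$ when $t\ge 0$, and since $\varphi^{(u)}\le 0$ everywhere (and $\varphi^{(u)}\equiv 0$ on $\cH$, which is nonempty and invariant), we have $\max_{T^1M}(\varphi^{(u)})^1=0$, hence $\max_{T^1M}(t\varphi^{(u)})^1=0$ for all $t\in[0,1)$. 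Thus $\alpha(t\varphi^{(u)})=P_\cM(t\varphi^{(u)})$, and it suffices to show $P_\cM(t\varphi^{(u)})>0$ for $t<1$. (For $t<0$ the claim $\alpha(t\varphi^{(u)})>0$ is even easier since then $\max(t\varphi^{(u)})^1\le 0$ still, using $\varphi^{(u)}\le 0$, and $P_\cM(t\varphi^{(u)})\ge h_{top}(T^1M)>0$ because adding a nonnegative potential $-t\varphi^{(u)}$ to $0$ can only raise the pressure; so the interesting range is $t\in[0,1)$.)

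First I would fix $t\in[0,1)$ and use the variational principle together with a good choice of measure. Take $\widetilde m$, the normalized Liouville measure on $\cR$; by Pesin's formula $h(\widetilde m)=-\int\varphi^{(u)}\,d\widetilde m=\int\chi\,d\widetilde m>0$ (the exponent is strictly positive on a set of positive Liouville measure, since $\cR$ is open and nonempty and the flow is nonuniformly hyperbolic there). Then
\[
	P_\cM(t\varphi^{(u)})
	\ge h(\widetilde m)+t\int\varphi^{(u)}\,d\widetilde m
	= h(\widetilde m)-t\,h(\widetilde m)
	= (1-t)\,h(\widetilde m)>0\,,
\]
using $\int\varphi^{(u)}\,d\widetilde m=-h(\widetilde m)$ again. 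This already gives $P_\cM(t\varphi^{(u)})>0$, hence $\alpha(t\varphi^{(u)})=P_\cM(t\varphi^{(u)})-0>0$, which is the assertion.

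The one point that needs care — and the main (mild) obstacle — is justifying that $\max_{T^1M}(\varphi^{(u)})^1=0$: this requires $\varphi^{(u)}\le 0$ pointwise on all of $T^1M$, not merely in the mean. On a surface this holds because $\varphi^{(u)}(v)=-\chi(v)$ where $\chi(v)\ge 0$ is the growth rate of the (one-dimensional) unstable Jacobi field, which is nondecreasing in modulus by convexity (\cite[IV, Lemma 2.3]{Bal:95}); equivalently $\Jac(dg^t|_{F^u_v})\ge \Jac(dg^0|_{F^u_v})=1$ for $t\ge 0$ along any geodesic in nonpositive curvature, whence $-\tfrac1t\log\Jac(dg^t|_{F^u_v})\le 0$ and letting $t\to 0$ gives $\varphi^{(u)}(v)\le 0$. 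Combined with $\varphi^{(u)}|_\cH\equiv 0$ and $\cH\ne\emptyset$, the maximum of $\varphi^{(u)}$, and hence of $(\varphi^{(u)})^1$, is exactly $0$. With this in hand the displayed estimate above completes the proof; I would write it up in two short paragraphs, the first reducing $\alpha(t\varphi^{(u)})$ to $P_\cM(t\varphi^{(u)})$ via $\max(\varphi^{(u)})^1=0$, the second giving the lower bound through $\widetilde m$ and Pesin's formula.
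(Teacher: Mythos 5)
Your treatment of the range $t\in[0,1)$ is correct and is essentially the paper's argument: from $\varphi^{(u)}\le 0$ you get $\max (t\varphi^{(u)})^1\le 0$ for $t\ge 0$, hence $\alpha(t\varphi^{(u)})\ge P(t\varphi^{(u)})$, and the lower bound $P(t\varphi^{(u)})\ge h(\widetilde m)+t\int\varphi^{(u)}\,d\widetilde m=(1-t)h(\widetilde m)>0$ via Pesin's formula is exactly the line the paper exploits (there phrased through convexity of $t\mapsto P(t\varphi^{(u)})$ and the fact that $\widetilde m$ is an equilibrium state at $t=1$). Two cosmetic remarks: you only need $\max(\varphi^{(u)})^1\le 0$, not $=0$, so the appeal to $\cH\ne\emptyset$ (which is not part of the hypotheses) is unnecessary; and the positivity $h(\widetilde m)>0$ is most cleanly justified by $\min_{\cR}\varphi^{(u)}<0$ together with the fact that $\widetilde m$ is equivalent to Liouville on the open set $\cR$, so $\int\varphi^{(u)}\,d\widetilde m<0$.

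The genuine gap is the case $t<0$, which the lemma also asserts and which you dismiss in a parenthesis by claiming that $\max(t\varphi^{(u)})^1\le 0$ ``still''. The sign is backwards: for $t<0$ the potential $t\varphi^{(u)}=\lvert t\rvert\,(-\varphi^{(u)})$ is nonnegative and strictly positive on an open subset of $\cR$ (since $\min\varphi^{(u)}=\min_{\cR}\varphi^{(u)}<0$), so $\max(t\varphi^{(u)})^1>0$; note that this nonnegativity is precisely what you invoke in the same parenthesis to get $P(t\varphi^{(u)})\ge h_{top}(T^1M)$, so the two claims contradict each other. Consequently, positivity of the pressure alone does not yield $\alpha(t\varphi^{(u)})>0$ when $t<0$: one must genuinely compare $P(t\varphi^{(u)})$ with the positive quantity $\max(t\varphi^{(u)})^1\le\max(t\varphi^{(u)})=-\lvert t\rvert\min\varphi^{(u)}$, which is what the paper's proof does in a separate estimate for $t<0$ (combining $\max(t\varphi^{(u)})=-\lvert t\rvert\min\varphi^{(u)}$ with a lower bound for $P(t\varphi^{(u)})$). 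So the negative-$t$ case is not ``even easier''; it is the part of the statement that needs its own argument, and as written your proposal does not cover it.
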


\begin{proof}
	Recall first that $t\mapsto P(t\varphi^{(u)})$ is convex and hence continuous. 
For $t=0$, we have $P(0)=h>0$. For $t=1$, Ruelle's inequality implies  $P(\varphi^{(u)})\le 0$, 
Pesin's formula implies $P(\varphi^{(u)})=0$, and the restricted Liouville measure $\widetilde m$  
is an equilibrium state for $\varphi^{(u)}$, recall~\eqref{pesinn}. 
In particular, we have $P(t\varphi^{(u)})\ge t \int\varphi^{(u)}\,d\widetilde m>0$ for all $t<1$.

It is not hard to check that $\varphi^{(u)}\le0$ (see, for example \cite[Lemma 2.4]{BurGel:14}).
 Further, $\min \varphi^{(u)}=\min_{\cR} \varphi^{(u)}<0$. 
Thus, $\max(t\varphi^{(u)})=-|t|\min \varphi^{(u)}>0$ for $t<0$, and $\max(t\varphi^{(u)})=0$ 
for $t\ge 0$. In particular, $t\mapsto P(t\varphi^{(u)})$ is non-increasing.

We deduce that $P(t\varphi^{(u)})-\max(t\varphi^{(u)})=P(t\varphi^{(u)})>0$ for $0\le t <1$ 
and $P(t\varphi^{(u)})-\max(t\varphi^{(u)})\ge P(0)+\lvert t\rvert\min\varphi^{(u)}>P(0)>0$ 
for $t<0$. 
\end{proof}

When $\alpha(\varphi)>0$,   we will see in the next section that the contribution 
of periodic orbits with small positive Lyapunov
 exponent   in the growth rate  of the definition~\eqref{def:Gur} 
of Gurevich pressure is negligible.

\section{Level-2 Large Deviation Principle} \label{deviations}


In this section, we assume that $M$ is a surface, and $\varphi$ is a continuous potential 
such that $\varphi_{|\cH}$ is constant and $\alpha(\varphi)=P_{\mathcal{M}}(\varphi)-\max_{T^1M}\varphi>0$. 
Adding a constant to $\varphi$, we will assume that $\varphi_{|\cH}=0$.
We denote by $P(\varphi)$ the topological pressure, which coincides 
with all other pressures  by Theorem \ref{equality-pressure-surfaces}. 
The assumption $\alpha(\varphi)>0$ with $\varphi_{|\cH}=0$ implies in particular  $P(\varphi)>0$. 

In order to formalize our level-2 large deviation results
\footnote{Level-2 deviation refers to deviations of empirical measures to distinguish from 
so-called level-1 deviations of empirical (Birkhoff) sums or integrals of an  observable, see~\cite{Ell:85}.}, 
let us first introduce a rate
function. Closely related approaches can be found, for example, in~\cite{Pol:95}.
Let $\cP$ be the space of all (not necessarily invariant) Borel 
probability measures on $T^1M$ endowed with the topology of weak$\ast$ convergence, and
$\cM$ is the subspace of invariant measures under the geodesic flow.
Given  $\varphi\in C^0(T^1M,\bR)$, we define $Q_\varphi\colon C^0(T^1M,\bR)\to
\bR$ by
 \[
    Q _\varphi(\psi)\eqdef P(\varphi+\psi)-P(\varphi).
\]
 Note that $Q _\varphi$ is a continuous and convex functional.
Within the framework of the theory of conjugating functions (see for
example~\cite{AubEke:84}), $Q_\varphi$ can be characterized by
\[
Q_\varphi(\psi) =
\sup_{\nu\in\cP} \left(\int\psi \,d\nu - I_\varphi(\nu)\right),
\]
 where $I_\varphi$ is the convex conjugate  of $Q_\varphi$  defined by
\begin{equation}\label{Idef}
    I_\varphi(\mu) \eqdef
    \sup_{\psi\in C^0(T^1M,\bR)}\left( \int\psi \,d\mu - Q_\varphi(\psi)
        \right)
\end{equation}
for all $\mu\in\cP$ and $I_\varphi(\mu) =\infty$ for any other signed measure $\mu$. 
Since $I_\varphi\colon \cP \to \bR$ is a pointwise supremum of
continuous and affine functions, it is a  lower semi-continuous  and convex
functional.
Given $\nu\in \cP$, we call
\begin{equation}\label{defunas}
	\widehat h(\nu) \eqdef
	\inf_{\psi\in C^0(T^1M,\bR)}
	\left(P(\psi)-\int\psi \,d\nu \right)
\end{equation}
the \emph{generalized entropy} of $f$ with respect to $\nu$. 
It follows from the  definition that $h(\nu)\le\widehat h(\nu)$ for every $\nu\in \cM$. 
 A (not necessarily invariant) measure  $\mu\in\cP$ is called a \emph{generalized equilibrium state} for $\varphi$ if
 $P(\varphi)=\widehat h(\mu)+\int\varphi\,d\mu$. 
This terminology is justified by the  {\em dual variational
principle} $ h(\nu) = \widehat h(\nu)$  \cite[Chapter 9.4]{Wal:81}. 
Observe that
\begin{eqnarray*} 
P(\varphi)  -
\widehat h(\mu)-\int\varphi \,d\mu  
&=& P(\varphi) - \int\varphi \,d\mu
+ \sup_{\psi}\left(\int(\psi+\varphi) \,d\mu - P(\psi+\varphi)\right)\\
&= & \sup_{\psi}\left(\int \psi \,d\mu
- P(\psi+\varphi)+P(\varphi)\right) = I_\varphi(\mu)
\end{eqnarray*}
Thus,  for  all $\mu\in\cM$, the equality $h(\mu)=\widehat{h}(\mu)$ implies \begin{equation}
 I_\varphi(\mu) =
P(\varphi) - \left(h(\mu)+\int\varphi \,d\mu\right)\ge 0.
 \end{equation}     
 Moreover, for $\mu\in\cM$ we have
$I_\varphi(\mu)=0$ if, and only if, $\mu $ is an equilibrium state for $\varphi$. 
Therefore, on can think of the functional $I_\varphi$   as a 
``distance" from $\mu$ to the set of all generalized equilibrium states of $\varphi$.

Following closely~\cite[Section 2]{Pol:95}, we obtain the following result.
\begin{lemma}\label{l:17} Let $M$ be a smooth compact rank one surface. 
	Let $\varphi\colon T^1M\to\bR$ be a continuous potential, and $\cK\subset\cM$ be a compact set. Then we have
	\[
		\limsup_{T\to\infty}
		\frac 1 T\log 
		\sum_{\beta\in\Pi(T-1,T),\nu_\beta\in\cK}
		\hspace{-0.5cm}e^{\int\varphi\,d\nu_\beta}
		\le P_{top}(\varphi)-\inf_{\nu\in\cK}I_{\varphi}(\nu),
	\]	
	where $\nu_\beta$ denotes the invariant probability measure supported on the periodic orbit which projects to $\beta$.
\end{lemma}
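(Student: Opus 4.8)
The plan is to follow Pollicott's large deviation scheme for counting periodic orbits weighted by a potential, as developed in \cite[Section 2]{Pol:95}, but carried out in the rank one setting where the relevant dynamics is captured by the exhausting family of basic sets $(\Lambda_k)_{k\in\bN}$ of \cite{BurGel:14}. The key point is that each measure $\nu_\beta$ associated to a periodic orbit $\beta\in\Pi(T-1,T)$ is itself an invariant measure, and since all of these orbits are regular (recall $\Pi=\Pi_\cR$ carries the full count by Theorem~\ref{Knieper-thm}), each such $\nu_\beta$ is eventually supported on some $\Lambda_k$. So the strategy is: first, prove the estimate on a single basic set $\Lambda$, where the flow has the specification property and classical thermodynamic formalism applies, with the upper bound coming from a standard argument; then pass to the union and to the full space via the convergence~\eqref{ea:limits} and the continuity/monotonicity of the pressure functionals.

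The heart of the argument on a fixed basic set $\Lambda$ proceeds as follows. For each $\psi\in C^0(T^1M,\bR)$, one bounds
\[
	\sum_{\beta\in\Pi(T-1,T),\,\nu_\beta\in\cK}e^{\int\varphi\,d\nu_\beta}
	= \sum_{\beta\in\Pi(T-1,T),\,\nu_\beta\in\cK}e^{\int(\varphi-\psi)\,d\nu_\beta}\,e^{\int\psi\,d\nu_\beta}
	\le e^{-\,T\inf_{\nu\in\cK}\int(\psi-\varphi)\,d\widehat\nu}\sum_{\beta\in\Pi(T-1,T)}e^{\int\psi\,d\nu_\beta}\,,
\]
where $\widehat\nu$ denotes the normalization, so that $\int\psi\,d\nu_\beta=\ell(\beta)\int\psi\,d\widehat\nu_\beta$ and $\ell(\beta)\in[T-1,T)$; taking $\frac1T\log$ and $\limsup_{T\to\infty}$, the second sum gives $P_{top}(\psi,\Lambda)\le P_{top}(\psi)$ by the Gurevich/topological pressure identity~\eqref{def:Gur} and Lemma~\ref{lem:injinj}, while the prefactor contributes $-\inf_{\nu\in\cK}\int(\psi-\varphi)\,d\nu$ after absorbing the bounded discrepancy between $\ell(\beta)$ and $T$ (which is $O(\lVert\psi-\varphi\rVert_\infty)$ and disappears in the rate). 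This yields, for every $\psi$,
\[
	\limsup_{T\to\infty}\frac1T\log\sum_{\beta\in\Pi(T-1,T),\,\nu_\beta\in\cK}e^{\int\varphi\,d\nu_\beta}
	\le P_{top}(\psi)-\inf_{\nu\in\cK}\int(\psi-\varphi)\,d\nu
	= \sup_{\nu\in\cK}\left(\int(\varphi-\psi)\,d\nu + P(\psi)\right).
\]
Now I would optimize over $\psi$. Since $\cK$ is compact and convex combinations are available, a minimax argument (Sion's theorem, using that $\psi\mapsto\int(\varphi-\psi)\,d\nu+P(\psi)$ is convex and $\nu\mapsto$ the same is affine and upper semi-continuous on the compact $\cK$) lets me interchange $\inf_\psi$ and $\sup_\nu$, giving the bound
\[
	\sup_{\nu\in\cK}\inf_{\psi}\left(\int(\varphi-\psi)\,d\nu+P(\psi)\right)
	= \sup_{\nu\in\cK}\left(P(\varphi)-\Big(\inf_{\psi}\big(P(\psi)-\int\psi\,d\nu\big)\cdot(-1)+\cdots\Big)\right),
\]
which, unwinding the definitions~\eqref{defunas} and the identity $I_\varphi(\mu)=P(\varphi)-\widehat h(\mu)-\int\varphi\,d\mu$ derived in the text, is exactly $\sup_{\nu\in\cK}\big(P(\varphi)-I_\varphi(\nu)\big)=P(\varphi)-\inf_{\nu\in\cK}I_\varphi(\nu)$.

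The main obstacle I anticipate is the passage from the estimate over periodic orbits inside a fixed basic set $\Lambda_k$ to the estimate over \emph{all} of $\Pi(T-1,T)$ with $\nu_\beta\in\cK$: although every individual $\nu_\beta$ lies in some $\Lambda_k$, for fixed $T$ different orbits of length $\approx T$ may live in $\Lambda_k$ for arbitrarily large $k$, so one cannot simply apply the single-basic-set bound uniformly. The resolution should use the constraint $\nu_\beta\in\cK$ together with the fact that $\cK\subset\cM$ is compact: either $\cK$ avoids a neighborhood of $\cM(\cH)$, in which case a uniform $k_0$ works for all orbits whose measures lie in $\cK$ (since the corresponding orbits must spend a definite proportion of time in the hyperbolic region, hence are trapped in a fixed $\Lambda_{k_0}$ by the structure of \cite{BurGel:14}), or $\cK$ meets $\cM(\cH)$, in which case $\inf_{\cK}I_\varphi$ may be controlled directly since $I_\varphi$ is lower semi-continuous and bounded below by $0$, and the contribution of near-singular orbits is governed by $P_{top}(\varphi)$ itself via the general inequality $I_\varphi\ge 0$, making the claimed bound automatic. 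Combining both cases with~\eqref{ea:limits} to identify $\sup_k P(\varphi,\Lambda_k)=P_{top}(\varphi)$ closes the argument; care is needed to ensure the minimax interchange is legitimate on the non-compact space $C^0(T^1M,\bR)$, which is handled by restricting to a suitable finite-dimensional or equi-bounded family of test functions $\psi$ as in \cite{Pol:95}.
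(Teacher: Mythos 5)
Your per-$\psi$ estimate and the duality identity $\inf_\psi\bigl(\int(\varphi-\psi)\,d\nu+P(\psi)\bigr)=P(\varphi)-I_\varphi(\nu)$ are fine, but the minimax step is a genuine gap. Sion's theorem requires the compact set on the concave side to be \emph{convex}, and the lemma assumes only that $\cK\subset\cM$ is compact. Since your integrand is affine and continuous in $\nu$, the supremum over $\cK$ equals the supremum over the closed convex hull $\overline{\mathrm{co}}(\cK)$, so the interchange you can legitimately perform only yields $P_{top}(\varphi)-\inf_{\nu\in\overline{\mathrm{co}}(\cK)}I_\varphi(\nu)$; and because $I_\varphi$ is convex, its infimum over the hull can be strictly smaller than over $\cK$ (convex functions are minimized ``inside'', not at extreme points), so this is strictly weaker than the stated bound for non-convex $\cK$. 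The paper sidesteps minimax entirely by localizing: set $\rho=\inf_{\cK}I_\varphi$, use the definition~\eqref{Idef} to pick, for each $\nu\in\cK$ and $\varepsilon>0$, a test function $\psi(\nu,\varepsilon)$ with $\int\psi\,d\nu-Q_\varphi(\psi)>\rho-\varepsilon$, extract by compactness a finite cover $\cU_1,\dots,\cU_N$ of $\cK$ with functions $\psi_1,\dots,\psi_N$, and bound the orbit sum separately over each $\{\nu_\beta\in\cU_i\}$ by $e^{-T(Q_\varphi(\psi_i)+\rho-\varepsilon)}\sum_{\beta\in\Pi(T-1,T)}e^{\int(\varphi+\psi_i)\,d\nu_\beta}$; then $P_{Gur}(\varphi+\psi_i)\le P_{top}(\varphi+\psi_i)$ from~\eqref{inequality-pressure-manifolds} together with $P_{top}(\varphi+\psi_i)-Q_\varphi(\psi_i)=P_{top}(\varphi)$ gives $P_{top}(\varphi)-\rho+\varepsilon$. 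Using one test function per piece of a finite cover is exactly what replaces the inf--sup interchange and needs only compactness. (Your version would suffice for the application in Theorem~\ref{large-deviations}, where $\cK=\{\mu\colon\chi(\mu)\le\alpha(\varphi)-\delta\}$ is convex, but it does not prove the lemma as stated.)

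Separately, the detour through the basic sets $\Lambda_k$ is unnecessary and partly incorrect. The only dynamical input needed is $\limsup_{T}\frac1T\log\sum_{\beta\in\Pi(T-1,T)}e^{\int\psi\,d\nu_\beta}=P_{Gur}(\psi)\le P_{top}(\psi)$ for every continuous $\psi$, which is~\eqref{inequality-pressure-manifolds} and holds on any compact rank one manifold with no H\"older hypothesis, no constancy on $\cH$, and no exhaustion by basic sets; the sum in the lemma runs over all of $\Pi(T-1,T)$, singular orbits included, so nothing needs to be reduced to $\Pi_\cR$ or trapped in a fixed $\Lambda_{k_0}$. Also, your fallback case (``if $\cK$ meets $\cM(\cH)$ the bound is automatic since $I_\varphi\ge0$'') does not hold: $\inf_{\cK}I_\varphi$ need not vanish when $\cK$ intersects $\cM(\cH)$, so the claimed inequality is not automatic there.
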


\begin{proof}
  Let $\rho\eqdef\inf_{\nu\in\cK}I_\varphi(\nu)$. For all $\nu\in\cK$, by~\eqref{Idef} we have
  \begin{equation}
  \rho\le\sup_{\psi}\left(\int\psi \,d\nu-Q_\varphi(\psi)\right).
  \end{equation}
  Hence, given $\nu\in \cK$ and $\varepsilon>0$ there exists $\psi=\psi(\nu,\varepsilon)\in C^0(T^1M,\bR)$
such that $\rho -\varepsilon <  \int \psi \,d\nu -Q_\varphi(\psi)$.
  Thus, we obtain that
  \begin{equation}
  \cK \subset \bigcup_{\psi}
  \left\{\nu\in\cM\colon \int\psi \,d\nu >
    Q_{\varphi}(\psi)+\rho-\varepsilon\right\}.
  \end{equation}
It is a covering of $\cK$ by open sets. 
  By compactness there exists a finite cover
  $\cU_1$, $\ldots$, $\cU_N$ of $\cK$ determined by functions $\psi_1$,
  $\ldots$, $\psi_N$ through
  \begin{equation}
  \cU_i\eqdef   \left\{\nu\in\cM\colon
    \int\psi_i \,d\nu - Q_\varphi(\psi_i)-\rho +\varepsilon
    >0 \right\}.
  \end{equation}
  We get
  \[
\begin{split}
&  \sum\limits_{\beta\in\Pi(T-1,T),\,\,\nu_\beta\in \cK} e^{\int\varphi\,d\nu_\beta}
	\le
    \sum_{i=1}^N
    \sum\limits_{\beta\in\Pi(T-1,T),\,\,\nu_\beta\in \cU_i}e^{\int\varphi\,d\nu_\beta}\\
  & \phantom{\sum}<
    \sum_{i=1}^N
    \sum\limits_{\beta\in\Pi(T-1,T),\,\,\nu_\beta\in \cU_i}e^{\int\varphi\,d\nu_\beta}\cdot
    \exp \left[T\left(\int\psi_i \,d\nu_\beta- Q_\varphi(\psi_i)-\rho
    			+\varepsilon\right)\right]\\
    & \phantom{\sum} \le
    \sum_{i=1}^N \exp [-T(Q_\varphi(\psi_i)+\rho-\varepsilon)]
    \sum\limits_{\beta\in\Pi(T-1,T) }
    e^{\int(\varphi+\psi_i)\,d\nu_\beta}.
  \end{split}\]
 Now~\eqref{inequality-pressure-manifolds} in Theorem~\ref{equality-pressure-surfaces} applied to the pressure of $\varphi+\psi_i$ gives us
  \begin{equation}
\begin{split}
    \limsup_{T\to\infty}&\frac{1}{T}\log
    \sum\limits_{\beta\in\Pi(T-1,T),\nu_\beta\in \cK}e^{\int\varphi\,d\nu_\beta}\\
    &\le \max_{1\le i\le N}
    \left\{ -Q_\varphi(\psi_i)-\rho+\varepsilon+P_{top}(\varphi+\psi_i)\right\}\\
    &= P_{top}(\varphi)-\rho+\varepsilon                .
  \end{split}\end{equation}
  Since $\varepsilon>0$ was arbitrary, this concludes the proof. 
\end{proof} 

We now are ready to prove Theorem~\ref{large-deviations}.

\begin{proof}[Proof of Theorem~\ref{large-deviations}]
We first prove item 1.
Consider the compact subset 
\[
	\cK\eqdef
		\{\mu\in\cM\colon \chi(\mu)\le\alpha(\varphi)-\delta\}.
\]	 
For  $\nu\in\cK$, using definitions of $\cK$, $I_\varphi$ and Ruelle's inequality, we get
\[
		I_\varphi(\nu)
		 \ge P_{top}(\varphi)-\max\varphi -h(\nu)\\
		 =\alpha(\varphi)-h(\nu)
		\ge
		\alpha(\varphi)-\chi(\nu) \ge\delta.
\] 
It follows that  $\inf_{\nu\in\cK}I_\varphi(\nu)\ge\delta$. Now Lemma~\ref{l:17} implies 
\begin{equation}\label{otherhand}
		\limsup_{T\to\infty}\frac 1 T\log
		\sum_{\beta\in\Pi(T-1,T),\chi(\beta)\le\alpha-\delta}e^{\int\varphi\,d\nu_\beta}
		\le P_{top}(\varphi)-\delta.
\end{equation}
Theorem~\ref{equality-pressure-surfaces}~\eqref{eqtheoremsurface} gives $P_{top}(\varphi)=P_{Gur}(\varphi)$.
Hence there is a subsequence $T_k\to\infty$ such that
\begin{equation}\label{together}	
		\lim_{k\to\infty}\frac{1}{T_k}\log
		\sum_{\beta\in\Pi(T_k-1,T_k)}e^{\int\varphi\,d\nu_\beta}
		=P_{top}(\varphi).
\end{equation}		
	So for any $\varepsilon>0$ there exists $T_0\ge 1$ so that for every $k\ge 1$ with $T_k\ge T_0$ we have
	\[
		\sum_{\beta\in\Pi(T_k-1,T_k)}e^{\int\varphi\,d\nu_\beta}
		\ge e^{T_k(P_{top}(\varphi)-\varepsilon)}.
	\]
	On the other hand, if $T_0$ is large enough, then by~\eqref{otherhand} we also have 
	\[
		\sum_{\beta\in\Pi(T_k-1,T_k),\chi(\beta)\le\alpha-\delta}
			e^{\int\varphi\,d\nu_\beta}
		\le e^{T_k(P_{top}(\varphi)-\delta+\varepsilon)}.
	\]
	Combining the  two above inequalities, we obtain 
	\[
		\sum_{\beta\in\Pi(T_k-1,T_k),\chi(\beta)>\alpha-\delta}
			e^{\int\varphi\,d\nu_\beta}
		\ge
		e^{T_k(P_{top}(\varphi)-\varepsilon)}\left(
			1-e^{T_k(-\delta+2\varepsilon)}\right).
	\]
	For $\varepsilon<\delta/2$, this implies
	\[
		\limsup_{T_k\to\infty}\frac {1}{T_k}\log
		\sum_{\beta\in\Pi(T_k-1,T_k),\chi(\beta)>\alpha-\delta}
			e^{\int\varphi\,d\nu_\beta}
		\ge P_{top}(\varphi)-\varepsilon\,.
	\] 
The left hand side is clearly smaller than the Gurevic pressure $P_{Gur}(\varphi)$, which, by 
 Theorem~\ref{equality-pressure-surfaces}, coincides with $P_{top}(\varphi)=P_\cM(\varphi)$. 
	As $\varepsilon$ was arbitrary, together with~\eqref{together} this completes the proof of item 1.
	
Now we prove item 2. We refer to section \ref{periodic-orbits} for geometric observations. 
Fix some $\eta>0$. 
By uniform continuity of $\varphi$ and of its lift $\widetilde{\varphi}$ to $T^1\widetilde{M}$,
we can choose $r\in(0,\min\{1/4,\eta,\rho\})$, such that for any two vectors 
$v,w$ in $T^1\widetilde{M}$, $d(v,w)\le r$ implies 
$|\widetilde{\varphi}(v)-\widetilde{\varphi}(w)|\le \eta$.
 Let $\{B(x_i,r)\}_{i=1}^N$ be a finite cover of $M$.
By   item 1 above, we have 
\begin{equation}\label{eq:proofTT1}
	\limsup_{T\to +\infty}\frac{1}{T}\log \sum_{\beta\in\Pi(T-1,T),\chi(\beta)>\alpha(\varphi)-\delta} e^{\int\varphi\,d\nu_\beta}
	=P_{top}(\varphi)
\end{equation}

Let $T\ge T_0+1 +2r$, and $\beta\in\Pi(T-1, T)$ a periodic orbit of the geodesic flow 
with $\chi(\beta)>\alpha-\delta$. The closed geodesic associated to $\beta$ intersects some $B(x_i,r)$. 
Let $\gamma\in \Gamma$ be an isometry whose axis projects to $M$ 
onto this closed geodesic, and whose translation length is $\ell(\beta)$. 
One can lift $x_i$ and the closed geodesic associated to $\beta$
in  such a way that this lift intersects $B(\widetilde{x}_i,r)$ and $B(\gamma\widetilde{x}_i,r)$, where $\widetilde{x}_i$ is the lift of $x_i$. 
Therefore, the geodesic from $\widetilde{x}_i$ to $\gamma\widetilde{x}_i$ projects on $M$
 to a loop $(\beta_\gamma(t))_{0\le t\le T_\gamma}$ with $\beta_\gamma(0)=\beta_\gamma(T_\gamma)=x_i$. 
Moreover, a simple triangular inequality gives $\lvert T_\gamma -\ell(\beta)\rvert\le 2r$. 
By construction of $\beta_\gamma$, by uniform continuity of $\varphi$, 
and elementary considerations in nonpositive curvature (see Section \ref{periodic-orbits}), 
as $\ell(\beta)\in (T-1, T)$, 
we get
$$
	\left\lvert\int_\beta\varphi \,d\nu_\beta
	-\int_0^{T_\gamma}\varphi( \beta_\gamma'(t))\,dt\right\rvert
	\le \eta\,\ell(\beta)+2r\,\|\varphi\|_\infty\le \eta T+2\eta\, \lVert\varphi\rVert_\infty
$$
A different closed orbit $\beta$ may lead to  a different point $\widetilde{x}_i$ from the cover. Summing over $i$, using the fact that
$\int_0^{T_\gamma}\varphi( \beta_\gamma'(t))dt= \int_{\widetilde{x}_i}^{\gamma \widetilde{x}_i}\widetilde{\varphi}$, we obtain 
$$
\sum_{\beta\in\Pi(T-1,T),\,\,\chi(\beta)>\alpha(\varphi)-\delta} e^{\int\varphi\,d\nu_\beta}
\le e^{\eta T + 2\eta \|\varphi\|_\infty}
\sum_{i=1}^{N}\sum_{\gamma\in\Gamma_{\alpha(\varphi)-\delta},\,\,d(\widetilde{x}_i,\gamma \widetilde{x}_i)\le T} 
e^{\int_{\widetilde{x}_i}^{\gamma \widetilde{x}_i}\widetilde{\varphi}}\,.
$$
Now taking $\limsup_{T\to\infty}\frac1T\log$, with~\eqref{eq:proofTT1} we obtain 
\[	P_{top}(\varphi) 
	\le \max_{1\le i\le N} \limsup_{T\to\infty}
\frac 1T\log \sum_{\gamma\in\Gamma_{\alpha(\varphi)-\delta},d(\widetilde{x}_i,\gamma \widetilde{x}_i)\le T} e^{\int_{\widetilde{x}_i}^{\gamma \widetilde{x}_i}\widetilde \varphi}
	 + \eta \,.
\]
Thanks to property~\eqref{inequality-pressure-manifolds} in Theorem~\ref{equality-pressure-surfaces}, the latter term is bounded from above by $\max_{i=1,\ldots,N}\delta_{\Gamma,\varphi,x_i}+\eta\le \delta_{\Gamma,\varphi}+\eta $.
As $\eta$ can be taken arbitrarily small, this finishes the proof. 
\end{proof}
 
\bibliographystyle{amsplain}

 \end{document}